\newtheorem{thm}{Theorem}
\newtheorem{theorem}{Theorem}
\newtheorem{claim}[theorem]{Claim}
\newtheorem{corollary}[theorem]{Corollary}
\newtheorem{lemma}[theorem]{Lemma}
\newtheorem{proposition}[theorem]{Proposition}
\theoremstyle{definition}
\newtheorem{definition}[theorem]{Definition}
\newtheorem{remark}[theorem]{Remark}
\newlength{\espaceavantspecialthm}
\newlength{\espaceapresspecialthm}
\newcommand{\R}{\mathbb{R}}
\newcommand{\N}{\mathbb{N}}
\newcommand{\Z}{\mathbb{Z}}
\newcommand{\T}{\mathbb{T}}
\newcommand{\Sp}{\mathbb{S}}
\newcommand{\Pj}{\mathbb{P}}
\newcommand{\varep}{\varepsilon}
\newcommand{\Homeo}{\operatorname{Homeo}}
\newcommand{\Id}{\operatorname{Id}}
\newenvironment{defi*}[1][]{
\vskip \espaceavantspecialthm \noindent \textbf{D\'efinition.} }%
{\vskip \espaceapresspecialthm}
\tikzset{->-/.style={decoration={
  markings,
  mark=at position .5 with {\arrow{latex}}},postaction={decorate}}}
\newcommand\test[1]{
\pgfmathsetmacro{\var}{#1}
\pgfmathparse{ifthenelse(\var>=0,"positif","négatif")} \pgfmathresult}%
\newcommand{\hgline}[3]{
\pgfmathsetmacro{\thetaone}{#1}
\pgfmathsetmacro{\thetatwo}{#2}
\pgfmathsetmacro{\theta}{(\thetaone+\thetatwo)/2}
\pgfmathsetmacro{\phi}{abs(\thetaone-\thetatwo)/2}
\pgfmathsetmacro{\close}{less(abs(\phi-90),0.0001)}
\ifdim \close pt = 1pt
    \draw[->-, color=#3] (\thetaone:1) -- (\thetatwo:1);
\else
	\pgfmathsetmacro{\R}{tan(\phi)}
	\pgfmathsetmacro{\test}{(\thetaone-\thetatwo)/abs(\thetaone-\thetatwo)}
	\draw[->-, color=#3] (\thetaone:1) arc (\thetaone+\test*90:\thetaone+\test*(270-2*\phi):\R);
\fi
}
\newcommand{\hglinefill}[3]{
\pgfmathsetmacro{\thetaone}{#1}
\pgfmathsetmacro{\thetatwo}{#2}
\pgfmathsetmacro{\theta}{(\thetaone+\thetatwo)/2}
\pgfmathsetmacro{\phi}{abs(\thetaone-\thetatwo)/2}
\pgfmathsetmacro{\close}{less(abs(\phi-90),0.0001)}
\ifdim \close pt = 1pt
    \filldraw[->-, color=#3] (\thetaone:1) -- (\thetatwo:1) ;
\else
	\pgfmathsetmacro{\R}{tan(\phi)}
	\pgfmathsetmacro{\test}{(\thetaone-\thetatwo)/abs(\thetaone-\thetatwo)}
	\filldraw[color=#3, opacity=.2] (\thetaone:1) arc (\thetaone+\test*90:\thetaone+\test*(270-2*\phi):\R) arc (\thetaone+\test*(270-2*\phi)-90:\thetaone+\test*90+90:1);
\fi
}
\begin{document}

\sloppy

\title{Parabolic isometries of the fine curve graph of the torus}
\author{Pierre-Antoine Guih\'eneuf, Emmanuel Militon}
\maketitle

\begin{abstract}
In this article we finish the classification of actions of torus homeomorphisms on the fine curve graph initiated by Bowden, Hensel, Mann, Militon, and Webb in \cite{BHMMW}. 

This is made by proving that if $f \in \mathrm{Homeo}(\T ^2)$, then $f$ acts elliptically on $C^{\dagger}(\T ^2)$ if and only if $f$ has bounded deviation from some $v \in \mathbb{Q}^2 \setminus \left\{ 0 \right\}$. The proof involves some kind of slow rotation sets for torus homeomorphisms.
\end{abstract}

\selectlanguage{english}

\setcounter{tocdepth}{1}
\tableofcontents

\section{Introduction}

The \emph{fine curve graph} $C^\dagger(S)$ of a closed surface $S$ was introduced by Bowden, Hensel, and Webb \cite{BHW} to give a counterpart of the classical \emph{curve graph} adapted to the study of the group of all homeomorphisms of $S$. More precisely, the classical curve graph $C(S)$ has vertex set the isotopy classes of essential simple closed curves on S, with edges between pairs of isotopy classes that can be realized disjointly (a slight modification is needed for genus 1 surfaces). Note that the natural action of a homeomorphism on curves quotients down to an action of the mapping class group $\mathrm{Map}(S)$ on the curve graph $C(S)$ by isometries.
The Gromov hyperbolicity (or equivalently $\delta$-hyperbolicity) of $C(S)$, showed by Masur and Minsky \cite{zbMATH01355494, zbMATH01545126}, then implies numerous geometric and algebraic properties of the mapping class group $\mathrm{Map}(S)$ (e.g. \cite{zbMATH06521340, zbMATH06035994, zbMATH05578711, zbMATH06751842, zbMATH05868010}).

In this paper we will focus on the case of the torus $S = \T^2$. Let us give the precise definition of the fine curve graph in this context.

\begin{definition}
The \emph{fine curve graph} on the torus $\T^2$ is the graph $C^\dagger(\T^2)$ whose vertices are essential\footnote{\emph{I.e.} non contractible.} simple loops. There is an edge between two vertices $\alpha$ and $\beta$ if and only if the loops $\alpha$ and $\beta$ have at most one intersection point.
\end{definition}

As a consequence of the Gromov hyperbolicity of the classical curve graphs for punctured surfaces, it was proved in \cite{BHW} that the fine curve graph $C^\dagger(S)$ is Gromov hyperbolic. This enables the authors to use large scale geometry techniques to study Homeo(S) via its action on $C^\dagger(S)$. As an application, they prove that, for any closed surface $S$ of genus $\ge 1$, stable commutator length and fragmentation norm on $\mathrm{Homeo}_0(S)$ are unbounded, answering a question posed by Burago, Ivanov, and Polterovich \cite{zbMATH05526532}.
\medskip

In the same way as the mapping class group $\mathrm{Map}(S)$ acts on $C(S)$ by isometries, the whole homeomorphism group $\mathrm{Homeo}(S)$ acts on $C^\dagger(S)$ by isometries. Gromov has classified isometries of Gromov hyperbolic spaces \cite[paragraph 8]{zbMATH04031953}, \cite{zbMATH01385418}, according to the asymptotic translation length, defined for an isometry $g$ of a Gromov hyperbolic space $X$ as
\[|g|_X = \lim_{n\to+\infty}\frac{1}{n} d_X\big(x,g^n(x)\big).\]
It is a standard exercise to see that this limit exists and is independent of $x$. This independence immediately implies that the asymptotic translation length is a conjugacy invariant of isometries of $X$.
Gromov classification is then as follows: for $g$ an isometry of a Gromov hyperbolic space, $g$ is
\begin{itemize}
\item \textbf{Hyperbolic} if the asymptotic translation length is positive;
\item \textbf{Parabolic} if the asymptotic translation length is zero but $g$ has no
finite diameter orbits, and
\item \textbf{Elliptic} if $g$ has finite diameter orbits.
\end{itemize}
There is an equivalent reformulation of this trichotomy in terms of fixed points on the Gromov boundary of $X$, but we do not require this point of view in the present work.

While there is no mapping class acting parabolically on\footnote{It follows from \cite[Proposition 4.6]{zbMATH01355494} and the Nielsen–Thurston classification \cite{zbMATH04103989}.} $C(S)$, the situation is much richer for the action of homeomorphisms on $C^\dagger(S)$: in \cite{BHMMW}, the authors prove that there are homeomorphisms of $\T^2$ acting parabolically on $C^\dagger(\T^2)$. They also initiate a classification of actions of homeomorphisms on $C^\dagger(\T^2)$ in terms of rotational behaviour: they give a criterion of hyperbolicity in terms of rotation set, as well as examples of parabolic and elliptic homeomorphisms.
In the present article, we complete their work to give a complete classification of actions of homeormorphisms of $\T^2$ in terms of rotational behaviour.

\begin{definition}
Let $v \in \R^2 \setminus \left\{ 0 \right\}$. We say that $f\in\Homeo(\T^2)$ has \emph{bounded deviation from direction $v$} if there exists $\rho\in\R^2$ and a lift $\tilde{f}: \mathbb{R}^2 \rightarrow \mathbb{R}^2$ of $f$ such that $|\langle\tilde{f}^n(x)-x-n\rho,v\rangle|$ is bounded from above, uniformly in $x\in\R^2$ and $n\in\N$.
\end{definition}

Observe that this definition does not depend on the chosen lift $\tilde{f}$ of $f$.

Recall the classification of homotopy classes for torus homeomorphisms: if $f\in\Homeo(\T^2)$, then $f$ of $f^2$ is homotopic either to the identity, or to a Dehn twist (defined in Section~\ref{SecDefRot}), or to a linear Anosov automorphism.

A homeomorphism homotopic to a linear Anosov automorphism has unbounded deviation in any direction, and a homeomorphism homotopic to a Dehn twist has unbounded deviation in any direction but possibly one. To see this, consider one point of $\R^2$ and some nontrivial integer translate of it, and look at the deviation between these two points (at some point one has to use the fact that the eigendirections of linear Anosov automorphisms have irrational slope).

Note that if $f\in \mathrm{Homeo}(\T^2)$ is homotopic to identity and has bounded deviation from two non-collinear directions, then it has bounded deviation from any direction.
\bigskip

The main theorem of this article is the following.

\begin{thm} \label{ThmElliptictorus}
Let $f \in \mathrm{Homeo}(\T ^2)$. Then $f$ acts elliptically on $C^{\dagger}(\T ^2)$ if and only if $f$ has bounded deviation from some $v \in \mathbb{Q}^2 \setminus \left\{ 0 \right\}$.
\end{thm}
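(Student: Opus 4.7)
Given bounded deviation from $v \in \Q^2 \setminus\{0\}$, rescale $v$ to a primitive integer vector and pick a primitive $w \in \Z^2$ orthogonal to it; let $\alpha_0 \subset \T^2$ be the straight simple loop of slope $w$. The hypothesis forces the lifts of $\tilde{f}^n(\alpha_0)$ to stay within a fixed-width tube around the line $\R w + n\rho$. Projecting back to $\T^2$ and restricting to a subsequence along which $n\langle \rho, v\rangle \bmod \Z$ stays close to a prescribed value, the curves $f^n(\alpha_0)$ along that subsequence all sit in a common annulus $A$ parallel to $\alpha_0$. A translate $\beta$ of $\alpha_0$ in the $v$-direction, placed just outside $A$, is disjoint from both $\alpha_0$ and every such $f^n(\alpha_0)$, yielding $d_{C^\dagger}(\alpha_0, f^n(\alpha_0)) \leq 2$. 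A pigeonhole argument on the finitely many residue classes then controls the full orbit.

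\textbf{Necessary direction.} Assume $f$ acts elliptically. If $f^2$ is Anosov-homotopic, then it has interior rotation set and hence acts hyperbolically by the BHMMW criterion, contradicting ellipticity. The Dehn-twist case is handled by specialising the argument below to the twist's invariant direction, which is automatically rational. So the core case is $f$ homotopic to the identity, where BHMMW forces $\rho(f)$ to have empty interior and thus to lie in an affine line $L \subset \R^2$. I would then prove two claims: (i) $L$ has rational direction, and (ii) the deviation of $\tilde{f}^n(x)-x-n\rho$ along a rational $v$ normal to $L$ is uniformly bounded. For (i), if $L$ were irrational I would extract via Franks-type realisation two ergodic measures with distinct rotation vectors along $L$ and convert their drift, read against auxiliary integer-homology curves, into iterates of $\alpha_0$ at unbounded $C^\dagger$-distance from $\alpha_0$. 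For (ii), given $L$ rational, if the deviation along $v$ were unbounded (necessarily sublinearly, since $v \perp L$ and $\rho(f)\subset L$), I would similarly promote this divergence into unbounded transverse winding of $f^{n_k}(\alpha_0)$, hence unbounded $C^\dagger$-distance. The latter step is where the paper's slow rotation sets enter, packaging sublinear-but-unbounded transverse displacement into a geometric obstruction to ellipticity.

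\textbf{Main obstacle.} The technical heart is claim (ii): converting divergence of a well-chosen sublinear (slow) rotational coordinate into an explicit lower bound on $C^\dagger$-distance. Finding the correct sublinear scaling $s(n) \to \infty$, showing the resulting slow rotation object is well-defined and non-trivial, and translating its non-triviality back into geometric winding of curves on $\T^2$ is where I expect the bulk of the work to lie.
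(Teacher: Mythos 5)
Your sufficient direction is essentially the paper's argument in different clothing: instead of your annulus-plus-pigeonhole packaging, the paper bounds the number of lifts of the straight loop $\alpha$ met by a lift of $f^n(\alpha)$ and invokes the inequality $d_{C^\dagger(\T^2)}(\alpha,f^n(\alpha))\le C_\alpha(f^n(\alpha))+1$ from \cite{BHMMW}; either route works, and yours is acceptable provided you justify why containment in finitely many annuli bounds the fine-graph distance (which again comes down to an intersection-count bound).

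The necessary direction, however, has a genuine gap: the step you defer as the ``main obstacle'' is not a technical detail but the entire content of the proof. You never explain how to convert unbounded (sublinear) transverse displacement into a lower bound on $C^\dagger$-distance. The paper does this via a non-ellipticity criterion (Proposition~\ref{PropParabolicityCriterion}): if some renormalization $\frac{1}{a_k}(\tilde f^{n_k}(D)-\tilde f^{n_k}(x_0))+w_k$ of images of a fundamental domain converges to a set containing a nontrivial segment of \emph{irrational} slope, then $f$ is not elliptic. The proof passes to covers $\T^2_m$, uses branched covers by square-tiled surfaces, and the fact that long irrational segments must cross every horizontal curve there. Your claim (i) (ruling out an irrational line $L$) is a special case of this criterion, and your sketch via Franks realization of two ergodic measures does not by itself produce any lower bound on fine-graph distance. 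Moreover, your dichotomy is pitched at the level of the rotation set, which fails to determine a direction when $f$ is a pseudo-rotation (rotation set a single point) — exactly the case where both elliptic and parabolic behaviour occur. The paper's dichotomy (Proposition~\ref{PropDichot}) is instead at the level of \emph{good limit values} $A_n=\frac{1}{d_n}(\tilde f^n(D)-\tilde f^n(x_0))$: either some good limit value contains an irrational segment (criterion applies), or all good limit values lie in one rational line, and in that second case a further renormalization argument (Proposition~\ref{LastProp}, using convexity of limit values and a topological intersection argument between images of fundamental domains) produces a limit containing a ball, to which the criterion again applies. Without these ingredients your claim (ii) is an assertion of the theorem, not a proof of it.
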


Combined with the results of \cite{BHMMW}, this theorem implies a complete classification of the action of homeomorphisms on $C^{\dagger}(\T ^2)$ in terms of rotational behaviour.

We denote by $\mathrm{Homeo}_0(\T ^2)$ the connected component of identity in the group $\mathrm{Homeo}(\T ^2)$; it coincides with the set of homeomorphisms that are homotopic to identity.
We denote the rotation set of $\tilde f$ by $\rho(\tilde{f})$. It is a compact convex subset of the plane capturing the rotational behaviour of the homeomorphism (see Section~\ref{SecDefRot} for precise definitions).

\begin{corollary}\label{CoroMain}
Let $f \in \mathrm{Homeo}_0(\T ^2)$. Then 
\begin{itemize}
\item $f$ acts hyperbolically on $C^{\dagger}(\T ^2)$ if and only if $\rho(\tilde f)$ has nonempty interior;
\item $f$ acts parabolically on $C^{\dagger}(\T ^2)$ if and only if $\rho(\tilde f)$ is a segment of irrational direction, or $\rho(\tilde f)$ is a segment of rational direction not passing through a rational point, or $f$ is a pseudo-rotation\footnote{We call \emph{pseudo-rotation} a homeomorphism $f\in\Homeo_0(\T^2)$ whose rotation set is reduced to a single point.} with unbounded deviation from any $v \in \mathbb{Q}^2 \setminus \left\{ 0 \right\}$;
\item $f$ acts elliptically on $C^{\dagger}(\T ^2)$ if and only if $\rho(\tilde f)$ is a segment of rational direction passing through a rational point, or $f$ is a pseudo-rotation with bounded deviation from some $v \in \mathbb{Q}^2 \setminus \left\{ 0 \right\}$.
\end{itemize}
\end{corollary}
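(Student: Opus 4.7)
The plan is to derive this corollary from Theorem~\ref{ThmElliptictorus}, combined with the hyperbolicity criterion of \cite{BHMMW} and known bounded-deviation results for torus homeomorphisms whose rotation set is a segment. Gromov's trichotomy makes the three bullets mutually exclusive, and since the first is precisely the criterion proved in \cite{BHMMW}, it suffices to establish the elliptic characterization; the parabolic one then follows by complementation within $\Homeo_0(\T^2)$, using that the rotation set of a homeomorphism homotopic to the identity is either a point, a non-degenerate segment, or a compact convex set with non-empty interior.

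For the easy direction of the elliptic bullet, assume either that $f$ is a pseudo-rotation with bounded deviation from some $v \in \Q^2\setminus\{0\}$, or that $\rho(\tilde f)$ is a non-degenerate segment of rational direction containing a rational point. In the first case Theorem~\ref{ThmElliptictorus} immediately yields ellipticity. In the second case, choose $v \in \Z^2$ primitive and perpendicular to the segment: then $\langle \rho', v\rangle = c$ is constant for all $\rho' \in \rho(\tilde f)$, and since $v \in \Z^2$ and the segment meets $\Q^2$, one has $c \in \Q$. D\'avalos's theorem --- which states that a homeomorphism whose rotation set projects in a rational direction to a single rational number has bounded deviation in that direction --- then yields bounded deviation of $f$ from $v$, and Theorem~\ref{ThmElliptictorus} concludes.

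For the converse direction, suppose $f$ acts elliptically. Theorem~\ref{ThmElliptictorus} supplies $v \in \Q^2 \setminus \{0\}$ and $\rho_0 \in \R^2$ witnessing bounded deviation, with constant $c = \langle \rho_0, v\rangle$. A standard argument on limits of $\tilde f^n(x)/n$ forces $\rho(\tilde f) \subset \{\rho' : \langle \rho', v\rangle = c\}$, so $\rho(\tilde f)$ is either a singleton --- the pseudo-rotation case of the conclusion --- or a non-degenerate segment of rational direction (perpendicular to $v \in \Q^2$). In the latter case, it remains only to show that this segment contains a rational point, or equivalently that $c \in \Q$.

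This last rationality statement is the main technical obstacle: a converse to D\'avalos's theorem valid in the non-degenerate segment case. My approach is to pass to the annulus quotient $A = \R^2 / \langle w \rangle$, where $w \in \Z^2$ is primitive in $v^{\perp}$; the induced map $\hat f$ of $A$ inherits uniformly bounded displacement in the $\R$-factor, with mean $c$ per iteration. If $c$ were irrational, combining the classical theory of rotation numbers for annulus homeomorphisms (Franks's periodic point theorem, or Le Calvez's equivariant foliations) with the non-degenerate torus rotation segment --- which furnishes $\tilde f$-invariant measures whose rotation vectors differ along the segment --- produces a contradiction, forcing $c \in \Q$ and completing the classification.
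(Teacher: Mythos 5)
Your overall reduction is the same as the paper's: take the hyperbolic characterization from \cite[Theorem 1.3]{BHMMW}, prove the elliptic bullet via Theorem~\ref{ThmElliptictorus} together with D\'avalos for the ``rational segment through a rational point'' direction, and obtain the parabolic bullet by complementation inside Gromov's trichotomy. Those parts, including the observation that bounded deviation from $v$ forces $\rho(\tilde f)$ to lie in an affine line orthogonal to $v$ (so that the only candidate direction of bounded deviation for a non-degenerate segment is the perpendicular of its direction), are correct and agree with the paper.

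The gap is in your last paragraph. The statement you need --- if $\rho(\tilde f)$ is a non-degenerate segment of rational direction at irrational height $c$ in the perpendicular direction, then the deviation in that perpendicular direction is unbounded --- is precisely the theorem of Passeggi and Sambarino \cite{zbMATH07228227}, which the paper simply cites; it is a substantial result, not a corollary of the classical annulus theory. Your proposed shortcut does not go through as sketched: in the infinite annulus $\R^2/\langle w\rangle$ the induced map has uniform drift $c\neq 0$ in the $\R$-factor, so every point is wandering, there is no chain recurrence and no returning disk, and Franks-type periodic point theorems are simply inapplicable. Likewise, the existence of invariant measures with distinct rotation vectors along the segment is perfectly consistent with $c$ being irrational (all of them have the same $v$-component $c$, and none of them need carry periodic orbits, since a segment avoiding $\Q^2$ contains no rational rotation vectors at all); no contradiction arises from these data alone. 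Whether such rotation sets exist at all is the open part of the Franks--Misiurewicz conjecture, which is a further sign that no soft argument can dispose of this case. To complete your proof you should either invoke Passeggi--Sambarino at this point, as the paper does, or supply the full forcing-theory argument, which is far beyond the paragraph you wrote.
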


Let us explain how to deduce this corollary from Theorem~\ref{ThmElliptictorus}.
The first point is \cite[Theorem 1.3]{BHMMW}. Hence it suffices to distinguish homeomorphisms that act elliptically from those that do not.

By Passeggi and Sambarino \cite{zbMATH07228227}, any homeomorphism whose rotation set is a segment of rational slope not passing by a rational point, has unbounded deviation, and hence by Theorem~\ref{ThmElliptictorus} acts parabolically.

Moreover, by D\'avalos \cite{zbMATH06914177}, any homeomorphism whose rotation set is a segment of rational direction $v$ passing through a rational point has bounded deviation in $v^\perp$, and hence by Theorem~\ref{ThmElliptictorus} acts elliptically.
 
If we suppose that the Franks-Misiurewicz conjecture \cite{zbMATH04149169} holds, the above corollary implies the following improvement of the second point: $f$ acts parabolically on $C^{\dagger}(\T ^2)$ if and only if $\rho(\tilde f)$ is a segment of irrational direction, or $f$ is a pseudo-rotation with unbounded deviation from some $v \in \mathbb{Q}^2 \setminus \left\{ 0 \right\}$. In particular any $f$ whose rotation set is a segment with rational direction acts elliptically on $C^{\dagger}(\T ^2)$.

Remark that there are both pseudo-rotations with bounded displacement in some rational direction (like actual rotations) and pseudo-rotations with unbounded displacement in any rational direction (see for \cite{MR3238423} for rational pseudo-rotations and \cite{zbMATH05613068} for irrational ones). Hence one cannot distinguish completely the possible types of actions of homeomorphisms on $C^\dagger(\T^2)$ only in terms of rotation sets.
Note that Jäger \cite{MR2501297}, Jäger-Tal \cite{zbMATH06782951} and Kocsard \cite{zbMATH07391928} gives criteria of semi-conjugation to a rotation for torus homeomorphisms with bounded displacements.

One can give a statement similar to Corollary~\ref{CoroMain} from the rotation viewpoint:

\begin{corollary}
Let $f \in \mathrm{Homeo}_0(\T ^2)$. Then 
\begin{itemize}
\item If $\rho(\tilde f)$ has nonempty interior, then $f$ acts hyperbolically on $C^{\dagger}(\T ^2)$;
\item If $\rho(\tilde f)$ is a segment with irrational slope, then $f$ acts parabolically on $C^{\dagger}(\T ^2)$;
\item If $\rho(\tilde f)$ is a segment with rational slope passing through a rational point, then $f$ acts elliptically on $C^{\dagger}(\T ^2)$;
\item If $\rho(\tilde f)$ is a segment with rational slope not passing through a rational point (a case that should never hold according to Franks-Misiurewicz conjecture \cite{zbMATH04149169}), then $f$ acts parabolically on $C^{\dagger}(\T ^2)$;
\item If $f$ is a pseudo-rotation with unbounded deviation from any $v \in \mathbb{Q}^2 \setminus \left\{ 0 \right\}$, then $f$ acts parabolically on $C^{\dagger}(\T ^2)$;
\item If $f$ is a pseudo-rotation with bounded deviation from some $v \in \mathbb{Q}^2 \setminus \left\{ 0 \right\}$, then $f$ acts elliptically on $C^{\dagger}(\T ^2)$.
\end{itemize}
\end{corollary}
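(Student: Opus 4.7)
The plan is to recognize the statement as a direct reformulation of the forward implications already contained in Corollary~\ref{CoroMain}. By the Misiurewicz--Ziemian structure theorem, for any $f \in \Homeo_0(\T^2)$ and any lift $\tilde f$, the rotation set $\rho(\tilde f)$ is a compact convex subset of $\R^2$, so it is either a point, a segment, or a set with non-empty interior. My first step would be to verify that items 1--6 partition the possibilities for $\rho(\tilde f)$ into mutually exclusive exhaustive cases: the segment case is split in three according to slope (irrational vs.\ rational) and, in the rational case, according to whether the segment passes through a rational point; the pseudo-rotation case is split in two according to the boundedness or not of the deviation in some rational direction.

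Next, I would match each item to the corresponding case of Corollary~\ref{CoroMain}. Items 1, 2, 4, 5 and 6 appear verbatim (modulo notation) among the bullets of Corollary~\ref{CoroMain}, so nothing is to be done for them. The only item not stated literally there is item 3: if $\rho(\tilde f)$ is a segment with rational slope passing through a rational point, then by D\'avalos \cite{zbMATH06914177} the homeomorphism $f$ has bounded deviation in the direction $v^{\perp} \in \Q^2 \setminus \{0\}$, where $v$ denotes the direction of the segment; applying Theorem~\ref{ThmElliptictorus} we conclude that $f$ acts elliptically on $C^{\dagger}(\T^2)$. This is precisely the argument already used in the excerpt to establish the corresponding bullet of Corollary~\ref{CoroMain}.

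Since every case reduces to an immediate invocation of Theorem~\ref{ThmElliptictorus}, or of the results of Passeggi--Sambarino \cite{zbMATH07228227} and D\'avalos \cite{zbMATH06914177} combined with it, there is no essential obstacle. The only point requiring minimal care is the exhaustiveness of the enumeration: the hypothesis ``$f$ is a pseudo-rotation'' is the entirety of the point case for $\rho(\tilde f)$, and items 5 and 6 together cover it via the dichotomy bounded/unbounded deviation from some rational vector, exactly as dictated by Theorem~\ref{ThmElliptictorus}.
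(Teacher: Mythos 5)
Your proposal is correct and matches the paper's (implicit) argument: this corollary is stated as an immediate reformulation of the one-directional implications of Corollary~\ref{CoroMain}, whose elliptic bullet is itself obtained exactly as you say, via D\'avalos and Theorem~\ref{ThmElliptictorus}. The only slip is cosmetic: item 3 does in fact appear verbatim as the ``if'' direction of the third bullet of Corollary~\ref{CoroMain}, so your extra argument for it, while correct, merely reproduces the paper's deduction of that bullet.
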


Theorem~\ref{ThmElliptictorus} also allows to give a complete classification of the action of homeomorphisms on $C^{\dagger}(\T ^2)$ for the ones that are not homotopic to identity. 

As proved in \cite{BHMMW}, any homeomorphism having an iterate homotopic to an Anosov linear automorphism acts hyperbolically on $C^\dagger(\T^2)$. Thus it remains to classify the actions in the case of a homeomorphism having an iterate homotopic to a Dehn twist.

\begin{corollary}
Let $f \in \mathrm{Homeo}(\T ^2)$ such that $f^r$ is homotopic to a Dehn twist.
\begin{itemize}
\item $f$ acts hyperbolically on $C^{\dagger}(\T ^2)$ if and only if $\rho(\tilde f^r)$ has nonempty interior;
\item $f$ acts parabolically on $C^{\dagger}(\T ^2)$ if and only if $\rho(\tilde f^r)$ is a single number, and $f$ has unbounded displacement in the vertical direction;
\item $f$ acts elliptically on $C^{\dagger}(\T ^2)$ if and only if $\rho(\tilde f^r)$ is a single number, and $f$ has bounded displacement in the vertical direction.
\end{itemize}
\end{corollary}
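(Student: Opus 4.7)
The plan is to deduce the corollary from Theorem~\ref{ThmElliptictorus} by exploiting the rigidity of the Dehn twist homotopy class. First, since the asymptotic translation length satisfies $|f^r|_{C^\dagger(\T^2)} = r \cdot |f|_{C^\dagger(\T^2)}$ and the three Gromov classes are preserved under iteration, $f$ belongs to the same Gromov class as $f^r$. I may therefore replace $f$ by $f^r$ and assume $f$ is itself homotopic to a Dehn twist; after choosing a basis of $\Z^2$, I take this twist to be $T(x,y) = (x+y,y)$, so that ``vertical direction'' refers to $v = (0,1)$.

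The key observation is that $f$ then has unbounded deviation from every direction $(a,b) \in \R^2 \setminus \{0\}$ with $a \neq 0$. Indeed, with $A = \left(\begin{smallmatrix} 1 & 1 \\ 0 & 1 \end{smallmatrix}\right)$ the homological action of $T$, any lift satisfies $\tilde f^n(x + (0,1)) = \tilde f^n(x) + A^n(0,1) = \tilde f^n(x) + (n,1)$. Writing $D_n(x) = \tilde f^n(x) - x - n\rho$, this gives $\langle D_n(x + (0,1)) - D_n(x), (a,b)\rangle = an$, which is unbounded in $n$ whenever $a \neq 0$; so the deviation cannot be uniformly bounded over $\R^2$. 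Consequently, ``$f$ has bounded deviation from some $v \in \Q^2 \setminus \{0\}$'' reduces to ``$f$ has bounded deviation from $v = (0,1)$'', which (after absorbing the vertical component of $\rho$ into the lift when possible) is the statement that $f$ has bounded displacement in the vertical direction.

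The three bullets now fall into place. The hyperbolic criterion is the Dehn-twist analogue of \cite[Theorem~1.3]{BHMMW}: the construction there of a quasi-axis in $C^\dagger(\T^2)$ from two distinct rotation vectors in the interior of $\rho(\tilde f)$ only uses the existence of orbits with distinct asymptotic horizontal speeds and transfers to the Dehn twist setting. For the elliptic case, Theorem~\ref{ThmElliptictorus} combined with the observation above gives that $f$ is elliptic iff $f$ has bounded vertical displacement; moreover, since elliptic excludes hyperbolic, the interior of $\rho(\tilde f)$ is empty, so $\rho(\tilde f)$ reduces to a single number. The parabolic case then follows from Gromov's trichotomy applied to the first and third bullets. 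The main potential obstacle is extracting or extending the hyperbolic criterion to the Dehn-twist class from \cite{BHMMW} if it is not literally stated there, but the quasi-axis construction should carry over with only cosmetic modifications.
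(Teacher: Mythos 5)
Your proposal is correct and matches the route the paper intends (it leaves this corollary's proof implicit): reduce to $f^r$ via invariance of the Gromov class under powers, observe that the Dehn-twist homotopy class forces unbounded deviation in every non-vertical direction so that Theorem~\ref{ThmElliptictorus} characterizes ellipticity by bounded vertical displacement, quote the hyperbolicity criterion of \cite{BHMMW}, and get the parabolic case by elimination. Your hedge about the hyperbolic bullet is unnecessary --- the paper likewise takes the Dehn-twist case of the criterion ``hyperbolic iff $\operatorname{int}(\rho(\tilde f))\neq\emptyset$'' directly from \cite{BHMMW} rather than re-deriving the quasi-axis.
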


Note that by Addas-Zanata, Tal, and Garcia \cite{zbMATH06296542}, if $\rho(\tilde f^r)$ is reduced to a single rational number, then $f$ has bounded displacement in the vertical direction and hence acts elliptically on $C^{\dagger}(\T ^2)$. To our knowledge, the question whether the second case is nonempty (\emph{i.e.} if there exists homeomorphisms homotopic to Dehn twists acting parabolically on $C^{\dagger}(\T ^2)$) is open.

Remark that by \cite{zbMATH06272683}, on a open and dense subset of $\Homeo_0(\T^2)$, the rotation set is a polygon with rational vertices, hence the set of parabolic elements of $\Homeo_0(\T^2)$ is included in a closed set with empty interior.

A last comment: in \cite{arxiv.2210.05460} Le Roux and Wolff prove that any automorphism of a variant of the fine curve graph is realized by some homeomorphism of the surface. They suggest that the same result holds for our definition of the fine curve graph, hence in some sense our classification of actions of homeomorphisms on the fine curve graph covers the whole automorphism group of the ifne curve graph.

\paragraph*{Some open questions}
\begin{enumerate}
\item Can we get a similar classification for higher genus surfaces\footnote{The authors have a strategy for a characterization of homeomorphisms isotopic to identity acting hyperbolically on $C^\dagger(S_g)$: they should be the ones with nonempty interior homological rotation set, and the ones with a pseudo-Anosov mapping class when removing some periodic orbit. It should be the subject of a future work.}? 
\item Are there some torus homeomorphisms homotopic to identity satisfying $\liminf d_n<+\infty$ and $\limsup d_n = +\infty$, where $d_n$ is the image under a lift of $f^{n}$ to $\mathbb{R}^2$ of a fixed fundamental domain?
\item Are there some torus homeomorphisms homotopic to identity acting parabolically but not properly on $C^\dagger(\T^2)$?
\item More generally, what are the sets of possible good limit values? Can they be classified?
\end{enumerate}

A potential master's student of the first author should start thinking about the last three questions soon.

\subsection*{Acknowledgments} 

The first author was supported by a PEPS-JCJC grant.
The second author was supported by the ANR project Gromeov ANR-19-CE40-0007.

The authors warmly thank Alejandro Kocsard and Roberta Shapiro for their useful comments about the first version of this article.

\subsection{Rotation sets for torus homeomorphisms}\label{SecDefRot}

Let $f\in \mathrm{Homeo}_0(\T ^2)$ and fix a lift $\tilde{f}: \mathbb{R}^2 \rightarrow \mathbb{R}^2$.

The \emph{rotation set} of $\tilde f$ is the set
\[\rho(\tilde f) = \left\{v\in\R^2\ \Big\vert\  \exists (x_k)_k\in\R^2, (n_k)_k\to+\infty : \frac{\tilde f^{n_k}(x_k)-x_k}{n_k}\underset{k\to+\infty}{\longrightarrow} v\right\}.\]
A theorem of Misiurevicz and Ziemian \cite{MR1053617} states that it is a compact convex subset of $\R^2$ (see also Lemma~\ref{LemConvexlimit}).
Some basic properties are straightforward consequences of the definition: for any $k\in\Z$, $\rho(\tilde f^k) = k\rho(\tilde f)$, and $\rho$ is a conjugacy invariant: if $g\in \mathrm{Homeo}_0(\T ^2)$ and $\tilde g$ is a lift of $g$ to $\R^2$, then $\rho(\tilde g \tilde f\tilde g^{-1}) = \rho(\tilde f)$.
It depends on the lift of $f$ in the following way: any other lift of $f$ can be written $\tilde f+v$, with $v\in\Z^2$. Then $\rho(\tilde f+v) = \rho(\tilde f)+v$.

We will be inspired by an equivalent formulation of the rotation set in Section~\ref{SecGood} to define good limit values: fix a fundamental domain $D\subset\R^2$ of the torus (\emph{e.g.} $D = [0,1]^2$), then (see \cite{MR1053617})
\[\rho(\tilde f) = \lim_{n\to+\infty} \frac{\tilde f^n(D)}{n},\]
where the limit holds in the Hausdorff topology (and in particular, the result states that the limit does exist).
\bigskip

Now, let $f\in \mathrm{Homeo}(\T ^2)$ homotopic to a Dehn twist. By this, we mean that there exists a basis of the torus in which $f$ is homotopic to the linear automorphism
\[\begin{pmatrix}
1 & k \\ 0 & 1
\end{pmatrix}\]
for some $k\in\Z^*$.

Fix a lift $\tilde{f}: \mathbb{R}^2 \rightarrow \mathbb{R}^2$, and denote by $p_2 : \R^2\to\R$ the projection on the second coordinate (according to the basis used to define the Dehn twist).

Following \cite{zbMATH01049390}, we can define the \emph{rotation set} of $\tilde f$ as
\[\rho(\tilde f) = \left\{v\in\R\ \Big\vert\  \exists (x_k)_k\in\R^2, (n_k)_k\to+\infty : p_2\Big(\frac{\tilde f^{n_k}(x_k)-x_k}{n_k}\Big) \underset{k\to+\infty}{\longrightarrow} v\right\}.\]
This set is a segment of $\R$. As for homeomorphisms isotopic to the identity, it follows from the definition that for any $g\in\Homeo_0(\T^2)$, we have $\rho(\tilde f) = \rho(\tilde g\tilde f\tilde g^{-1})$. Moreover, two lifts of $f$ to $\R^2$ have rotation sets which differ by an integral translation of $\R$.

\subsection{Outline}

One way of Theorem~\ref{ThmElliptictorus} is easy to prove: if the homeomorphism $f$ has bounded deviation from a rational direction, then $f$ acts elliptically on $C^{\dagger}(\T ^2)$. We write down the proof of this implication.

\begin{proof}[Proof of the "if" part in Theorem \ref{ThmElliptictorus}] Let us say that $f$ has bounded deviation from $v \in \mathbb{Q}^{2} \setminus \left\{ 0 \right\}$. Without loss of generality, we can suppose that $v=(p,q)$, where either $p$ and $q$ are relatively prime integers, or $(p,q)\in\{(0,1),(1,0)\}$. Let $\alpha : \mathbb{R}/\mathbb{Z} \rightarrow \T ^2$ be the loop defined by $t \mapsto t(q,-p)$.
When $\gamma$ and $\gamma'$ are isotopic loops of $\T ^2$, we denote by $ C_{\gamma}(\gamma')$ the number of lifts of $\gamma$ that are met by a given lift $\tilde{\gamma}'$ of $\gamma'$. This number does not depend on the chosen lift $\tilde{\gamma}'$ of $\gamma'$. 

Note that (as already said before) a homeomorphism homotopic to a linear Anosov automorphism has unbounded deviation in any direction, and a homeomorphism homotopic to a Dehn twist has unbounded deviation in any direction but possibly one. In the last case, this direction is such that the loops $(f^n(\alpha))_{n\in\N}$ are all homotopic one to each other.

As $f$ has bounded deviation from direction $v=(p,q)$, there exists $\rho\in\R^2$ and a lift $\tilde{f}: \mathbb{R}^2 \rightarrow \mathbb{R}^2$ of $f$ such that $|\langle\tilde{f}^n(x)-x-n\rho,v\rangle|$ is bounded uniformly in $x$ and $n$. 
This implies that the sequence $(C_{\alpha+n \rho}(f^{n}(\alpha))_{n \geq 0}$ is bounded, where $\alpha+n \rho$ is the loop $t\mapsto \alpha(t)+n \rho$.
Hence, as the loop $\alpha+n\rho$ is either disjoint from $\alpha$ or equal to $\alpha$, the sequence $(C_{\alpha}(f^{n}(\alpha)))_{n \geq 0}$ is bounded. But, by Lemma 4.5 of \cite{BHMMW}, for any $n \geq 0$,
\[C_{\alpha}(f^{n}(\alpha)) +1 \geq d_{C^{\dagger}(\T ^2)}(\alpha,f^{n}(\alpha))\]
and the orbit of $\alpha$ under $f$ in $C^{\dagger}(\T ^2)$ is bounded.
\end{proof}

To prove the direct implication, we suppose that $f$ has unbounded deviation from any rational direction; we want to prove that, in this case, $f$ does not act elliptically on $C^{\dagger}(\T ^2)$.

The first step is to define \emph{good limit values} (Section~\ref{SecGood}), that are analogs of rotation sets capturing sublinear speeds: instead of dividing by the time, one divides by the diameter of the iterate of the fundamental domain. An important property is that good limit values are convex, as stated in Lemma~\ref{LemConvexlimit}.

We then state a (non) ellipticity criterion in terms of good limit values. This criterion (Proposition~\ref{PropParabolicityCriterion}) improves a criterion given in \cite{BHMMW} and relies on branched covering maps of the torus by square tiled surfaces.

The next step is done in Section~\ref{SecPossible}. We set a dichotomy for possible shapes of good limit values: either one of them contains a segment of irrational slope, or there exists a rational line containing any good limit value. In the first case, the criterion set in the previous section applies and shows that the action of the homeomorphism is non elliptic. It then remains to treat the second case.

The final argument is given in Section~\ref{SecFinal}. We show that if any good limit value is contained in a single rational line, and if $f$ has unbounded deviation from any rational direction, then (a modified version of) some good limit value contains a segment of irrational slope. This allows to apply once again the parabolicity criterion.

\section{Good limit values}\label{SecGood}

In this section we define good limit values, that could also be called ``slow rotation sets'': they capture the rotational behaviour in the case when the rotation speed is sublinear. 

For any $n \geq 0$, we denote by $d_n$ the diameter of $\tilde{f}^n(D)$, where $D$ is the fundamental domain $[0,1]^2$ of the torus $\T^2$. The following lemma implies that there is a subsequence of  $(d_n)_n$ which tends to $+\infty$. In the sequel we will need a more precise result which is the second part of this lemma.

\begin{lemma}\label{LemDiamDeviation}
If $f\in\Homeo(\T^2)$ is not homotopic to a linear Anosov automorphism and has unbounded deviation in some direction, then $$\sup_{n\in\N} d_n=+\infty.$$

More precisely, if there exists $v\in\R^2$, $\rho\in\rho(\tilde f)$, $(n_k)$ going to infinity, and a sequence of points $(x_k)$ such that for any $k\in\N$, $|\langle\tilde{f}^{n_k}(x_k)-x_k-n_k\rho,v\rangle| \ge k$, then
\begin{equation}\label{EqDiamDeviation}
\lim_{k\to+\infty}\, \sup_{x,y\in D} \big|\big\langle \tilde f^{n_k}(x)-\tilde f^{n_k}(y),v\big\rangle\big| =+\infty.
\end{equation}
\end{lemma}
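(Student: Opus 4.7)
The plan is to establish the more precise second statement, from which the first follows by applying it to any $\rho \in \rho(\tilde f)$: the rotation set being nonempty, the unbounded deviation hypothesis supplies suitable sequences $(x_k), (n_k)$, and $d_{n_k}$ dominates the $v$-oscillation of $\tilde f^{n_k}(D)$ up to a bounded constant.

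Introduce $g_n(y) := \langle \tilde f^n(y) - y - n\rho, v\rangle$. I focus on the case where $f$ is homotopic to the identity, so $\tilde f$ commutes with $\Z^2$-translations and $g_n$ is $\Z^2$-periodic (the Dehn twist case with $v$ along the vertical direction is analogous). A direct computation gives
\[
\bigl| \sup_{x, y \in D} \bigl|\langle \tilde f^n(x) - \tilde f^n(y), v\rangle\bigr| - \operatorname{osc}_D g_n \bigr| \le \sqrt{2}\,\|v\|,
\]
so it suffices to prove $\operatorname{osc}_D g_{n_k} \to +\infty$.

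Assume for contradiction that, along an infinite subset of indices, $\operatorname{osc}_D g_{n_k} \le C$. Writing $x_k = y_k + m_k$ with $y_k \in D$ and $m_k \in \Z^2$, equivariance yields $g_{n_k}(y_k) = \langle \tilde f^{n_k}(x_k) - x_k - n_k\rho, v\rangle$, so $|g_{n_k}(y_k)| \ge k$. Combined with the oscillation bound and the $\Z^2$-periodicity of $g_{n_k}$, this forces (for $k \ge C$) either $g_{n_k}(y) \ge k - C$ for all $y \in \R^2$, or $g_{n_k}(y) \le -(k - C)$ for all $y$; the two cases being symmetric, I treat the first.

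The main tool is the cocycle identity $g_{(j+1)n_k}(y) = g_{j n_k}(y) + g_{n_k}(\tilde f^{j n_k}(y))$, which inductively gives $g_{j n_k}(y) \ge j(k - C)$ for every $j \ge 1$ and $y \in \R^2$. Since $\rho \in \rho(\tilde f)$, the identity $\rho(\tilde f^{n_k}) = n_k \rho(\tilde f)$ yields $n_k \rho \in \rho(\tilde f^{n_k})$, so the definition of the rotation set supplies $z_\ell \in \R^2$ and $J_\ell \to +\infty$ with $(\tilde f^{n_k J_\ell}(z_\ell) - z_\ell)/J_\ell \to n_k \rho$. Inserting into the iterated inequality and dividing by $J_\ell$ produces
\[
\Bigl\langle \frac{\tilde f^{n_k J_\ell}(z_\ell) - z_\ell}{J_\ell} - n_k \rho,\, v \Bigr\rangle \ge k - C,
\]
and letting $\ell \to +\infty$ the left-hand side tends to $0$, forcing $k \le C$. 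Since the extracted subsequence contains arbitrarily large $k$, this is the desired contradiction. The main difficulty I anticipate is guaranteeing that $\rho$ is actually realized as a rotation vector at the specific time-scale $n_k$; this is precisely what $\rho(\tilde f^{n_k}) = n_k \rho(\tilde f)$ ensures, and it is the reason why one must pass from $\tilde f$ to its $n_k$-th iterate before invoking the definition of the rotation set.
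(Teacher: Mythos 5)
Your proof is correct and follows essentially the same strategy as the paper's: assume the $v$-oscillation of $\tilde f^{n_k}(D)$ stays bounded, propagate the large deviation at $x_k$ to every point of $\R^2$ by periodicity, and iterate (your cocycle identity is exactly the paper's telescoping step) to contradict the rotation set. Your endgame is in fact slightly cleaner: by using that $n_k\rho$ is itself realized as a rotation vector of $\tilde f^{n_k}$ you obtain the contradiction $k\le C$ directly, whereas the paper first splits on whether $\{\langle\rho,v\rangle \mid \rho\in\rho(\tilde f)\}$ is a point or a nontrivial segment and then produces a second rotation vector $\rho'\notin\rho+\R v^{\perp}$.
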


Let us explain this statement. If $f\in\Homeo(\T^2)$ has no iterate homotopic to identity, then $\sup_{n\in\N} d_n=+\infty$, so the first part is relevant only in the case $f\in\Homeo_0(\T^2)$. If $f$ is homotopic to a Dehn twist about the horizontal direction, then \eqref{EqDiamDeviation} holds for any $v\notin \R(0,1)$, so the lemma is only relevant in the case $v\in\R (0,1)$, in which case one can define
\[\big\langle\tilde{f}^{n_k}(x_k)-x_k-n_k\rho,v\big\rangle :=  p_2\big(\tilde{f}^{n_k}(x_k)-x_k\big)-n_k\rho.\]

In the case where $f$ is homotopic to a linear Anosov automorphism, then \eqref{EqDiamDeviation} holds for any direction $v$, except possibly the stable direction of the automorphism (which is irrational).

\begin{proof}
We prove the lemma in the case $f\in\Homeo_0(\T^2)$, the case where $f$ is homotopic to a Dehn twist is identical.

Suppose that $f$ has unbounded deviation in some direction $v$. In particular, there exists $\rho\in\rho(\tilde f)$, $(n_k)$ going to infinity, and a sequence of points $(x_k)$ such that for any $k$, $|\langle\tilde{f}^{n_k}(x_k)-x_k-n_k\rho,v\rangle| \ge k$. Taking the image of each $x_k$ under an integral translation if necessary, we can suppose that the points $x_k$ all belong to $D$. Without loss of generality, by taking a subsequence if necessary, we can suppose that each of those scalar products are positive. 

If the set $\{\langle \rho,v\rangle\mid \rho\in\rho(\tilde f)\}$ is a nontrivial segment, then the conclusion of the lemma is straightforward. If not, then the quantity of \eqref{EqDiamDeviation} does not depend on the choice of $\rho\in\rho(\tilde f)$.

Suppose that the last limit of the lemma does not hold. Then, by considering a subsequence if necessary, it holds that
\[\sup_{k\in\N}\, \sup_{x,y\in D^2} \big|\langle \tilde f^{n_k}(x)-\tilde f^{n_k}(y),\,v\rangle\big| =R<+\infty.\]
Then for any $y\in D$, applying this to $x=x_k$, we have (considering that $n_0=0$)
\begin{align*}
\langle\tilde{f}^{n_k}(y)-y-n_k\rho,v\rangle = & \,\langle\tilde{f}^{n_k}(y) - \tilde{f}^{n_k}(x_k),v\rangle + \langle \tilde{f}^{n_k}(x_k)-x_k-n_k\rho,v\rangle\\
& + \langle x_k-y,v\rangle\\
\ge &\, k-2R.
\end{align*}
Observe that the left side of the above inequality does not change if we replace the point $y$ by one of its integral translate so that the inequality actually holds for any $y \in \mathbb{R}^2$.  
By choosing $k\ge 2R+1$, we get $\langle\tilde{f}^{n_k}(y)-y-n_k\rho,v\rangle \ge 1$. As this holds for any $y\in \R^2$, one can iterate: for any $z\in\R^2$ and any $\ell\in\N$, we have $\langle\tilde{f}^{\ell\,n_k}(z)-z-\ell\,n_k\rho,v\rangle \ge \ell$. In particular, this implies that there exists $\rho'\in\rho(\tilde f)\setminus (\rho+\R v^\perp)$, a contradiction.
\end{proof}

Fix $f\in\Homeo(\T^2)$ and a point $\tilde{x}_0 \in \mathrm{int}(D)$.
For any subset $A$ of $\mathbb{R}^2$, any $\lambda >0$ and any $v \in \mathbb{R}^2$, let us denote $ \lambda A +v = \left\{ \lambda a +v \mid a \in A \right\}$.
For any $n \geq 0$, let 
$$A_n= \frac{1}{d_n}\Big(\tilde{f}^n(D) -\tilde{f}^{n}(\tilde{x}_0)\Big).$$

More generally, fix a sequence $(a_k)_{k \geq 0}$ of positive real numbers as well as a sequence $(n_k)_{k \geq 0}$ of integers with $n_k \rightarrow +\infty$. For any $k \geq 0$, define 
\begin{equation}\label{EqDefBk}
B_k= \frac{1}{a_k} \left( \tilde{f}^{n_{k}}(D)- \tilde{f}^{n_k}(x_0) \right).
\end{equation}

As $\tilde{x}_0 \in D$, observe that, for any $n \geq 0$, $0 \in A_n$. Moreover, by definition of $(d_n)$, for any $n \geq 0$, the set $A_n$ is contained in the closed unit disc of $\mathbb{R}^2$ and has diameter 1. Recall that the set of compact subsets of the closed unit disc, endowed with the Hausdorff topology, is compact.

We endow the set of closed subsets of $\R^2$ with the following topology, which resembles Hausdorff convergence on any (large) ball.
Let $\phi$ be the map that to any closed subset $F$ of $\R^2$ associates the compact subset $F\cup\{\infty\}$ of the Alexandroff compactification of $\R^2$ (which is homotopic to $\Sp^2$). The topology on closed subsets of $\R^2$ we consider is then the initial topology associated to $\phi$ and Hausdorff topology on the Alexandroff compactification of $\R^2$.
The set of closed subsets of $\R^2$ endowed with this topology is compact.

\begin{definition}\label{DefGood}
We call \emph{good limit value} of the sequence $(A_{n})$ any limit value $A_\infty$ of this sequence such that there exists a subsequence $(A_{n_k})_{k \geq 0}$ which converges to $A_\infty$ with $\displaystyle \lim_{k \rightarrow + \infty} d_{n_k}= +\infty$.
\end{definition}

Note that a good limit value of the sequence $(A_n)$ is a limit value of the sequence $(B_k)$ associated to some sequence $(n_k)$ and with $a_k=d_{n_k}$.

By Lemma~\ref{LemDiamDeviation} --- combined with the compactness of the set of compact subsets of the unit disk endowed with Hausdorff topology --- if $f$ has unbounded deviation in some direction, then it has at least one good limit value. More generally, for any sequences $(a_k)_{k \geq 0}$ of positive real numbers and $(n_k)_{k \geq 0}$ of integers with $n_k \rightarrow +\infty$, the sequence $(B_k)$ admits a limit value (not necessarily compact).

The following lemma is a direct adaptation of a result of Misiurevicz and Ziemian \cite{MR1053617} asserting that rotation sets are convex.

\begin{lemma} \label{LemConvexlimit}
Suppose that $\lim_{ k \rightarrow +\infty} a_k = +\infty$. Then any limit value $B_{\infty}$ of the sequence $(B_k)$ is a convex subset of $\R^2$.
\end{lemma}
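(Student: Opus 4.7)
The plan is to reduce the convexity of $B_\infty$ to the classical theorem of Misiurewicz and Ziemian from \cite{MR1053617} that the rotation set $\rho(\tilde f)$ is convex. The starting point is the identity
\[
\frac{\tilde{f}^{n_k}(y) - \tilde{f}^{n_k}(x_0)}{a_k} = \frac{n_k}{a_k}\left(\frac{\tilde{f}^{n_k}(y) - y}{n_k} - \frac{\tilde{f}^{n_k}(x_0) - x_0}{n_k}\right) + \frac{y - x_0}{a_k},
\]
valid for every $y \in D$. Since $D$ is bounded and $a_k \to +\infty$, the last term is $o(1)$ uniformly in $y \in D$. Writing $R_k := \{(\tilde{f}^{n_k}(y)-y)/n_k : y \in D\}$ for the finite-time rotation set and $r_k := (\tilde{f}^{n_k}(x_0)-x_0)/n_k \in R_k$, this identity shows that $B_k$ coincides with $(n_k/a_k)(R_k - r_k)$ up to a uniformly vanishing error.

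Next I would invoke Misiurewicz and Ziemian: $R_k$ converges in the Hausdorff topology to the convex set $\rho(\tilde f)$. Passing to a subsequence along which the limit $B_\infty$ is realised, I would also arrange that $n_k/a_k$ tends to some $\ell \in [0, +\infty]$ and that $r_k$ tends to some $u \in \rho(\tilde f)$, using compactness of $\rho(\tilde f)$.

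When $\ell$ is finite, I expect $B_\infty = \ell(\rho(\tilde f) - u)$, via a double inclusion. For $B_\infty \subseteq \ell(\rho(\tilde f) - u)$, any $v \in B_\infty$ is realised by a sequence $(y_k)$ in $D$, and extracting a convergent subsequence of $(\tilde{f}^{n_k}(y_k) - y_k)/n_k$ yields a limit $u' \in \rho(\tilde f)$ with $v = \ell(u'-u)$. For the reverse inclusion, given any $u' \in \rho(\tilde f)$, the Hausdorff convergence $R_k \to \rho(\tilde f)$ provides $y_k \in D$ with $(\tilde{f}^{n_k}(y_k)-y_k)/n_k \to u'$, so that $\ell(u'-u) \in B_\infty$. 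Convexity of $B_\infty$ is then immediate since $\ell(\rho(\tilde f) - u)$ is a translated dilation of a convex set.

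The main obstacle will be the case $\ell = +\infty$, in which $B_\infty$ may pick up the point at infinity of the Alexandroff compactification and its finite part is obtained from limits of $(n_k/a_k)(r_k'-r_k)$ with $r_k' \in R_k$ subject to $|r_k'-r_k| = O(a_k/n_k)$. I would argue that $B_\infty \cap \R^2$ is then, up to scaling, the tangent cone of $\rho(\tilde f)$ at $u$, which is always convex as a tangent cone to a convex set --- equal to $\R^2$ when $u$ lies in the interior of $\rho(\tilde f)$, to a closed half-plane at a regular boundary point, or to a convex wedge at a vertex. The technical heart of this case will be controlling the rate of Hausdorff convergence $R_k \to \rho(\tilde f)$ at the small scale $a_k/n_k$, in particular producing enough points $r_k' \in R_k$ within $O(a_k/n_k)$ of $r_k$ in every prescribed tangent direction.
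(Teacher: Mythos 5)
Your reduction to the convexity of $\rho(\tilde f)$ only works when $n_k/a_k$ stays bounded, and that is not the regime the lemma is actually needed for. In the case $\ell=\lim n_k/a_k<+\infty$ your identity does give $B_\infty=\ell\big(\rho(\tilde f)-u\big)$, which is convex. But when $\ell=+\infty$ --- which is exactly the case of good limit values with sublinearly growing diameters $d_{n_k}=o(n_k)$, e.g.\ for pseudo-rotations, the central objects of this paper --- the argument breaks down, and not merely for technical reasons. The finite part of $B_\infty$ is then obtained by zooming into $R_k$ near $r_k$ at the vanishing scale $a_k/n_k$, and the Hausdorff convergence $R_k\to\rho(\tilde f)$ carries no information whatsoever at that scale. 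Your claim that $B_\infty\cap\R^2$ is (up to scaling) the tangent cone of $\rho(\tilde f)$ at $u$ is in fact false: for a pseudo-rotation with $d_{n_k}\to+\infty$ and $a_k=d_{n_k}$, the rotation set is a single point, whose tangent cone is $\{0\}$, while $B_\infty$ is a good limit value and hence has diameter $1$. So no estimate on the rate of Hausdorff convergence can rescue the reduction: $B_\infty$ genuinely depends on finer data than $\rho(\tilde f)$.

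The missing ingredient, which is what the paper uses, is a finite-scale and uniform-in-$n$ fact from the same Misiurewicz--Ziemian article: their Lemma 3.3 states that each set $\tilde f^{n}(D)$ is $\sqrt2$-quasi-convex (any convex combination of two of its points lies within distance $\sqrt2$ of the set; this holds because $\tilde f^n(D)$ is a connected fundamental domain for the $\Z^2$-action, with constant independent of $n$). Given $\xi_1,\xi_2\in B_\infty$ realized by sequences $p_k,q_k\in D$, one applies this to $z_k=\lambda\tilde f^{n_k}(p_k)+(1-\lambda)\tilde f^{n_k}(q_k)$ to produce $r_k\in D$ with $d\big(\tilde f^{n_k}(r_k),z_k\big)\le\sqrt2$; dividing by $a_k\to+\infty$ kills this additive error and shows $\lambda\xi_1+(1-\lambda)\xi_2\in B_\infty$. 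It is precisely this additive (rather than relative) error bound that survives the rescaling by $1/a_k$ in every regime of $n_k/a_k$, and your proposal has no substitute for it.
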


Note that this implies that any good limit value $A_{\infty}$ of the sequence $(A_n)$ is a convex subset of the closed unit disc.

\begin{proof}
Let $\xi_1$ and $\xi_2$ be two points of a limit value $B_\infty$ of the sequence $(B_k)_k$. Then, extracting a subsequence if necessary, there exist sequences $(p_k)_k$ and $(q_k)_k$ of points of $D$ such that 
$$ \lim_{k \rightarrow + \infty} \frac{1}{a_k} \Big(\tilde{f}^{n_k}(p_{k})- \tilde{f}^{n_k}({x}_0)\Big)= \xi_1$$
and
$$ \lim_{k \rightarrow + \infty} \frac{1}{a_k} \Big(\tilde{f}^{n_k}(q_{k})- \tilde{f}^{n_k}({x}_0)\Big)= \xi_2.$$
Let $ \xi= \lambda \xi_1 +(1-\lambda) \xi_2$, with $0< \lambda <1$ and let us prove that $\xi \in B_{\infty}$. For any $k \geq 0$, let $z_k= \lambda \tilde{f}^{n_k}(p_k)+(1-\lambda) \tilde{f}(q_k)$. By Lemma 3.3 of the article \cite{MR1053617} by Misiurewicz and Ziemia, the set $\tilde{f}^{n_k}(D)$ is $\sqrt{2}$-quasi-convex, so that for any $k \geq 0$ there exists a point $r_k \in D$ such that $d(\tilde{f}^{n_k}(r_k),z_k) \leq \sqrt{2}$. Then the sequence 
$$\bigg(\frac{1}{a_k} \Big(\tilde{f}^{n_k}(r_k)-\tilde{f}^{n_k}({x}_0)\Big)\bigg)_k$$
has the same limit as the sequence
$$\bigg(\frac{1}{a_k} \Big(z_k-\tilde{f}^{n_k}(\tilde{x}_0)\Big)\bigg)_k$$
and this limit is $\xi$. Hence the point $\xi$ belongs to $B_\infty$ and the set $B_\infty$ is convex.  
\end{proof}

\section{A non-ellipticity criterion}

Let $f\in\Homeo(\T^2)$ and $x_0 \in \T ^2$. The goal of this section is to prove the following criterion, which generalizes Section 6 of \cite{BHMMW}.

\begin{proposition}[Criterion of non-ellipticity] \label{PropParabolicityCriterion}
Suppose that the following holds:
\begin{enumerate}
\item $\displaystyle \lim_{ k \rightarrow +\infty} a_k = +\infty$.
\item There exists a sequence $(w_k)_k$ of vectors of $\R^2$ such that the sequence
\[(B_k+w_k)_{k \geq 0} = 
\left(\frac{1}{a_k} \left( \tilde{f}^{n_{k}}(D)- \tilde{f}^{n_k}(x_0) \right)+w_k\right)_{k\ge 0}\]
of compact subsets of $\mathbb{R}^2$ converges to some closed subset $B_{\infty}$ for the topology defined before Definition~\ref{DefGood}.
\item The set $B_{\infty}$ contains a nontrivial segment of irrational slope.
\end{enumerate}
Then $f$ does not act as an elliptic isometry of $C^{\dagger}(\T ^2)$.
\end{proposition}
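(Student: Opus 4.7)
\emph{Plan.} I would prove the statement by exhibiting an essential simple closed curve $\alpha \subset \T^2$ whose $f$-orbit in $C^\dagger(\T^2)$ is unbounded, so that $f$ cannot act elliptically.

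\emph{Step 1 (geometric setup and intersection blow-up).} Let $\sigma \subset B_\infty$ be a nontrivial segment of irrational slope $s$ and length $L > 0$. Pick a primitive integer vector $u=(p,q) \in \Z^2$ with $q/p \neq s$ --- possible since $s$ is irrational --- and let $\alpha \subset \T^2$ be the simple closed geodesic of homology class $u$, with lift $\tilde\alpha \subset \R^2$. The convergence $(B_k + w_k) \to B_\infty$ yields that, for $k$ large, the topological disk $\tilde f^{n_k}(D)$ contains an arc $\eta_k$ whose extent in direction $s$ is comparable to $a_k L$. Using the equivariance $\tilde f^{n_k}(\tilde\alpha + u) = \tilde f^{n_k}(\tilde\alpha) + u$ (in the Dehn-twist case, its conjugation analogue), a connectedness / intermediate-value argument applied to $\eta_k$ and its $T_u$-translates inside the $T_u$-translates of $\tilde f^{n_k}(D)$ forces the chosen lift of $f^{n_k}(\alpha)$ to meet at least a fixed constant multiple of $a_k L$ distinct $\Z^2$-translates of $\tilde\alpha$; hence the geometric intersection number $i(\alpha, f^{n_k}(\alpha)) \to +\infty$.

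\emph{Step 2 (from intersections to $C^\dagger$-distance via branched covers --- main obstacle).} This is the technical heart, generalizing Section~6 of \cite{BHMMW} through branched coverings of $\T^2$ by square-tiled surfaces. I would construct a finite branched cover $\pi: \Sigma \to \T^2$ with $f$-invariant branch locus (e.g.\ a periodic orbit of $f$) and $\Sigma$ a square-tiled surface of genus $g \geq 2$, such that: (i) some power $f^r$ lifts to a homeomorphism $F$ of $\Sigma$; (ii) suitable connected lifts $\widetilde\alpha, \widetilde{f^{n_k}\alpha} \subset \Sigma$ are essential simple closed curves with $i_\Sigma\bigl(\widetilde\alpha, \widetilde{f^{n_k}\alpha}\bigr) \geq c \cdot i_{\T^2}(\alpha, f^{n_k}\alpha)$. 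Masur--Minsky's logarithmic intersection--distance bound for the hyperbolic curve graph $C(\Sigma)$, combined with a coarsely Lipschitz comparison between $C^\dagger(\Sigma)$ and $C(\Sigma)$ and a pushdown inequality $d_{C^\dagger(\T^2)} \geq \tfrac{1}{K} d_{C^\dagger(\Sigma)} - C'$ from \cite{BHMMW}, then yields $d_{C^\dagger(\T^2)}(\alpha, f^{n_k}\alpha) \to +\infty$, contradicting ellipticity.

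\emph{Main obstacle.} The principal difficulty is Step~2: the branch locus must be $f$-invariant for a power of $f$ to lift, so one must produce (and carefully choose) a suitable periodic orbit of $f$, and one must arrange the cover so that the chain of coarse-Lipschitz relations from \cite{BHMMW} between $C^\dagger(\T^2)$, $C^\dagger(\Sigma)$ and $C(\Sigma)$ applies cleanly and preserves intersection numbers up to multiplicative constants. Step~1 is also subtle, because the long irrational arc $\eta_k$ does not a priori lie on $\tilde f^{n_k}(\tilde\alpha)$; the translation-equivariance under $T_u$ must be exploited to transfer the geometric information from the disk $\tilde f^{n_k}(D)$ to the curve $\tilde f^{n_k}(\tilde\alpha)$.
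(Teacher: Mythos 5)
There is a genuine gap, and it sits exactly at the step you identify as the ``technical heart.'' Your Step~2 tries to convert $i(\alpha,f^{n_k}\alpha)\to+\infty$ into $d_{C^\dagger(\T^2)}(\alpha,f^{n_k}\alpha)\to+\infty$ via Masur--Minsky. But the Masur--Minsky intersection bound goes the wrong way: it is an \emph{upper} bound on curve-graph distance in terms of (the logarithm of) intersection number, so large intersection number gives no lower bound on distance whatsoever. Indeed, the implication you want is simply false: take $f$ with rotation set a nontrivial vertical segment through $0$ (say the time-one map of a vertical flow) and $\alpha$ horizontal; then $i(\alpha,f^n\alpha)$ grows linearly, yet $f$ has bounded horizontal deviation and acts elliptically by the easy direction of Theorem~\ref{ThmElliptictorus}. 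This also shows that your Step~1, which only extracts unbounded intersection number, does not use the irrationality of the slope in any essential way (any direction transverse to $\alpha$ would give the same conclusion) and therefore cannot suffice. A secondary but real obstruction: you need an $f$-invariant branch locus so that a power of $f$ lifts to $\Sigma$, but $f$ may have no periodic orbit at all (irrational pseudo-rotations), so the cover you propose need not exist.

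The paper's proof avoids both problems by never lifting the dynamics. Assuming ellipticity, it fixes $K$ with $d_{C^\dagger(\T^2)}(\alpha,f^n(\beta))\le K$, passes to the finite cover $\T^2_m$ with $m\approx a_{k_0}/M$ (Lemma~\ref{LemLift}, distances do not increase), and uses Lemma~\ref{LemGoodCover} to get, for \emph{any} single branch point $p$, a square-tiled surface $\Sigma(K)$ in which curves at distance $\le K$ avoiding $p$ admit disjoint lifts. The branch point is then chosen a posteriori, for one well-chosen time $n_{k_0}$, by the purely topological Lemma~\ref{LemTopology}, so that the image of a path in $D$ is homotopic rel endpoints in $\T^2_m\setminus\{p\}$ to a long segment of irrational slope. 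The contradiction comes from Lemma~\ref{LemIrrationalLines}/Corollary~\ref{CorIrrationalLines}: long segments of irrational slope in a square-tiled surface must meet every horizontal curve, which is incompatible with the disjoint lifts forced by ellipticity. This is where the irrationality hypothesis is genuinely used, and it is the ingredient your outline is missing.
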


Recall that by \cite[Theorem 1.3]{BHMMW}, $f$ acts hyperbolically on $C^{\dagger}(\T ^2)$ if and only if $\operatorname{int}(\rho(\tilde f))\neq\emptyset$. Hence this criterion can be used to prove that some homeomorphism acts parabolically on $C^\dagger(\T^2)$.

To prove this proposition, we need to introduce some notation and three lemmas.

For any integer $m>0$ we let
$$\T ^2_{m}= \mathbb{R} / m \mathbb{Z} \times \mathbb{R} / m \mathbb{Z}.$$
This space can be seen as a cover of $\T ^2=\mathbb{R}^2 / \mathbb{Z}^2$ of degree $m^2$ via the projection $\T^2_{m} \rightarrow \T ^2$. We denote by $p_{m} : \mathbb{R}^2 \rightarrow \T ^2_{m}$ the projection.

We endow $\T ^2_{m}$ with the translation surface structure which makes the map
$$ \begin{array}{rcl}
\T ^2=\mathbb{R}/\mathbb{Z} \times \mathbb{R} / \mathbb{Z} & \rightarrow & \T ^2_{m} \\
(x,y) & \mapsto & (mx,my)
\end{array}$$
an isomorphism, where $\T ^2$ is endowed with the usual translation surface structure. This means that, in comparison to the usual euclidean metric on $\mathbb{R}^2$, distances are multiplied by $\frac{1}{m}$ on both coordinates (hence the diameter of $\T_m^2$ is $\sqrt 2$).

The first lemma we need is a purely topological lemma. We state it in the case of the torus, which is the case we need, but it is valid on any surface.

\begin{lemma} \label{LemTopology}
Let $\gamma_1$ and $\gamma_2$ be two simple paths $[0,1] \rightarrow \T ^2$ which are homotopic with fixed extremities in $\T ^2$. Then there exists a nonempty open set $U \subset \T ^2$ such that, for any point $p$ of $U$, the two paths $\gamma_1$ and $\gamma_2$ are homotopic with fixed extremities in $\T ^2 \setminus \left\{ p \right\}$.
\end{lemma}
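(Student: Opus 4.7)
The set $U := \{p \in \T^2 : \gamma_1 \sim \gamma_2 \text{ rel endpoints in } \T^2 \setminus \{p\}\}$ is open by a routine compactness argument: any witness homotopy $H : [0,1]^2 \to \T^2 \setminus \{p\}$ has compact image $K$ missing $p$, so $\T^2 \setminus K$ is an open neighborhood of $p$ on which $H$ remains a valid homotopy. Thus the task reduces to showing $U$ is nonempty, and the plan is to build an explicit homotopy from $\gamma_1$ to $\gamma_2$ whose image is a proper closed subset of $\T^2$.

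The construction proceeds in two steps. First, after a small PL (or smooth) approximation keeping $\gamma_1$ and $\gamma_2$ simple and in the same homotopy class rel endpoints, perform standard bigon reductions: each innermost bigon bounded by a subarc of $\gamma_1$ and a subarc of $\gamma_2$ is an embedded disk in $\T^2$, and isotoping one subarc across it reduces the number of interior intersections by two. After finitely many such moves one obtains an arc $\gamma_1^*$ homotopic to $\gamma_1$ rel endpoints with $\gamma_1^* \cap \gamma_2 = \{x_0, x_1\}$. The loop $\gamma_1^* \cup \gamma_2$ is then a simple null-homotopic closed curve in $\T^2$, and therefore bounds a topological disk $D'$ on one side (any null-homotopic simple closed curve on a closed surface bounds a disk). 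The final ``across-disk'' homotopy sweeping $\gamma_1^*$ to $\gamma_2$ through $D'$ has image exactly $\overline{D'}$.

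Concatenating these isotopies produces a homotopy from $\gamma_1$ to $\gamma_2$ whose total image is a finite union of closed disks in $\T^2$ (the bigon disks together with $\overline{D'}$). Since $\T^2$ is not a disk, $\overline{D'} \subsetneq \T^2$, and any $p$ outside the total image lies in $U$. The main obstacle I foresee is checking that this total image does not exhaust $\T^2$: the bigon disks may \emph{a priori} sit on either side of $\gamma_1^* \cup \gamma_2$, so it is not obvious they stay inside $\overline{D'}$. I would handle this either by a careful analysis in the universal cover $\R^2$ (exploiting that simplicity of $\gamma_1, \gamma_2$ in $\T^2$ forces the $\Z^2$-translates of their lifts to be pairwise disjoint, which strongly constrains the bounded ``winding region'' around $\tilde\gamma_1 * \overline{\tilde\gamma_2}$ and hence its projection), or by reordering the construction so that every modification is supported inside $\overline{D'}$ from the outset.
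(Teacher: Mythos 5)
Your reduction to nonemptiness is fine (openness of the set of good points $p$ follows from compactness of the image of a witness homotopy, exactly as you say), and the overall plan --- exhibit one explicit homotopy whose image misses a point --- is reasonable. But the proof has a genuine gap at precisely the step that carries the whole content of the lemma: you never show that the union of the bigon disks together with $\overline{D'}$ is a proper subset of $\T^2$. Each bigon disk is individually proper, and $\overline{D'}$ is proper, but a finite union of closed disks can perfectly well cover the torus, and the bigons produced along the reduction need not sit inside $\overline{D'}$ (indeed $D'$ is only defined after all reductions are done, so ``reordering the construction so that every modification is supported inside $\overline{D'}$'' is not available as stated). You flag this yourself as ``the main obstacle,'' and you sketch two possible repairs, but neither is carried out; as written, the argument proves the lemma only under the unverified hypothesis that the swept region is not all of $\T^2$, which is exactly what has to be proved. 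A secondary, more minor gap: the claim that bigon reductions terminate with $\gamma_1^*\cap\gamma_2=\{x_0,x_1\}$ needs the bigon criterion for arcs with fixed interior endpoints (so half-bigons at $x_0,x_1$ must also be treated) together with the observation that the minimal interior intersection number in the homotopy class rel endpoints is zero; this is standard but should be said.

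For comparison, the paper avoids controlling a homotopy altogether: it collapses $\gamma_1$ to a point, decomposes the image of $\gamma_2$ into simple loops based there, collapses the trivial ones, and then analyzes the lifts of the remaining loops in $\R^2$, where simplicity forces their ``interiors'' to be nested or disjoint; an induction shows the union of these interiors is not everything, and any point of the complement works. The paper also records a one-line alternative which is essentially your first proposed repair made precise: $\gamma_1\bar\gamma_2$ is null-homologous, so it admits a dual (winding-number) function on the complement of its image, and any point where that function vanishes does the job. If you want to salvage your construction, the cleanest route is to abandon the ``control the image of the homotopy'' strategy and run that dual-function argument directly.
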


\begin{proof}
Define the equivalence relation on $\T ^2$ whose equivalence classes are the singletons of points outside $\gamma_1$, and $\gamma_1$. This is the equivalence relation that shrinks the path $\gamma_1$ to a point. We denote by $T_1$ the quotient of $\T ^2$ by this equivalence relation and by $q_1$ the point that is the image of $\gamma_1$ in $T_1$.  

The space $T_1$ is still a $2$-torus and the image $\alpha_1$ of $\gamma_2$ in $T_1$ is a path. As $\gamma_2$ is simple, the path $\alpha_1$ has only self intersections at the point $q_1$ and the autointersection points cannot be transverse. Hence the path $\alpha_1$ is a homotopically trivial loop; it is a union of simple loops based at $q_1$ which do not meet each other except at $q_1$, some of them homotopically trivial and some of them homotopically non trivial (and there is a finite number of such last ones).

The complement of any homotopically trivial simple loop has one component which is homeomorphic to a disk, which we call the \emph{interior} of such a loop. Take the closure $C$ of the union of the interiors of the homotopically trivial simple loops appearing in the decomposition of $\alpha_1$. Observe that $C$ contains the point $q_1$, we can shrink $C$ to a point $q_2$ to obtain a new torus $T_2$. 

We call $\alpha_2$ the image of the path $\alpha_1$ in the torus $T_2$. The path $\alpha_2$ is the concatenation of a finite number of homotopically non trivial simple loops. Moreover, the loop $\alpha_2$ has auto-intersections only at the point $q_2$, none of which are transverse (too see this, use the fact that $\gamma_1$ is a simple loop). We then write $\alpha_2$ as a concatenation of loops $\beta_1,\beta_2, \ldots, \beta_r$, each of which is a homotopically trivial loop which cannot be written as a concatenation of nontrivial homotopically trivial loops. Fix a lift $\tilde{q}_2$ of the point $q_2$ to $\mathbb{R}^2$ and denote by $\tilde{\alpha}_2, \tilde{\beta}_1,\tilde{\beta}_2, \ldots, \tilde{\beta}_r$ the respective lifts of $\alpha_2,\beta_1,\beta_2, \ldots, \beta_r$ based at $\tilde{q}_2$. Observe that the loop $\tilde{\alpha}_2$ is the concatenation of the loops $\tilde{\beta}_1,\tilde{\beta}_2, \ldots, \tilde{\beta}_r$ and that each of the latter loop is simple. Observe that the interior of each $\tilde{\beta}_i$ is disjoint from its translates under the group of deck transformations. Hence the interior of $\tilde{\beta}_i$ projects injectively to the torus $T_2$ (because no transverse autointersection is allowed and no deck transformation has a fixed point). We call this projection the interior of the loop $\beta_i$, and we denote it by $I_i$. Observe also that, for $i \neq j$, either $I_i$ is contained in $I_j$ or $I_i$ contains $I_j$ or $I_i$ and $I_j$ are pairwise disjoint. An induction on $r$ shows that the complement of the union of the closures of the $I_i$'s is nonempty. It suffices to take as a set $U$ the preimage in $\T ^2$ of this subset of $T_2$ to prove the lemma.
\end{proof}

\begin{remark}
There is a shorter argument for the proof of this lemma, which uses the folkloric \emph{dual function}: the curve $\gamma_1\gamma_2^{-1}$ is homologous to 0. This allows to define a dual function on the complement of its image in the torus, and any point $p$ in which the dual function is equal to 0 suits the conclusion of the lemma.
\end{remark}

The three lemmas that follow are essentially proved in \cite{BHMMW}.

\begin{lemma} \label{LemLift}
Let $\alpha$ and $\beta$ be two curves in $C^{\dagger}(\T ^2)$ and $m \geq 1$. Then, for any respective lifts $\tilde{\alpha}$ and $\tilde{\beta}$ of $\alpha$ and $\beta$ in $\T ^2_m$ (via the cover map $\T^2_{m} \rightarrow \T ^2$), we have
$$d_{C^{\dagger}(\T ^2_m)}(\tilde{\alpha},\tilde{\beta}) \leq d_{C^{\dagger}(\T ^2)}(\alpha,\beta).$$
\end{lemma}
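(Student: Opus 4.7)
The plan is to lift a geodesic of length $n = d_{C^\dagger(\T^2)}(\alpha, \beta)$ from $\alpha$ to $\beta$ into a path of the same length in $C^\dagger(\T^2_m)$ between the prescribed lifts $\tilde\alpha$ and $\tilde\beta$. Writing such a geodesic as $\alpha = \gamma_0, \gamma_1, \ldots, \gamma_n = \beta$, I would inductively produce lifts $\tilde\gamma_i$ of $\gamma_i$ in $\T^2_m$ with $\tilde\gamma_0 = \tilde\alpha$, consecutive pairs adjacent in $C^\dagger(\T^2_m)$, and $\tilde\gamma_n = \tilde\beta$. The conclusion $d_{C^\dagger(\T^2_m)}(\tilde\alpha,\tilde\beta) \le n$ follows immediately.

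The key step is to show that adjacency passes to the cover: if $\gamma, \gamma'$ are adjacent in $C^\dagger(\T^2)$ and $\tilde\gamma, \tilde\gamma'$ are any lifts in $\T^2_m$, then every intersection point of $\tilde\gamma, \tilde\gamma'$ projects under the covering $\T^2_m \to \T^2$ to a point of $\gamma \cap \gamma'$, which has at most one element. For a transverse intersection, since the covering is orientation-preserving, the local intersection signs at all preimages of the unique crossing coincide, so the geometric intersection count of the lifts equals $|$algebraic intersection$|$. A short homological computation, using that a primitive simple closed curve on $\T^2$ lifts to $\T^2_m$ wrapping exactly $m$ times (since $\gcd(p,q) = 1$ forces the order of $(p,q)$ in $\Z^2/m\Z^2$ to equal $m$), shows that the algebraic intersection of the lifts in $\T^2_m$ agrees with that of $\gamma, \gamma'$ in $\T^2$, and is therefore at most $1$ in absolute value. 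Granting this, any choice of intermediate lifts yields a valid path, and in particular I may freely impose $\tilde\gamma_0 = \tilde\alpha$ and $\tilde\gamma_n = \tilde\beta$.

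The principal obstacle is the case of a single tangential intersection, which can occur between homotopic simple closed curves. Here the algebraic intersection vanishes, and several preimages of the tangency point can \emph{a priori} all lie on both lifts. I would address this either by a perturbation argument --- approximating $\gamma, \gamma'$ by simple closed curves meeting transversely at a single point, applying the transverse analysis, and passing to the limit using continuity of the adjacency relation --- or by a direct analysis of the topological annulus bounded by two homotopic curves and its behaviour under the covering. The cleanest route may simply be to appeal to the corresponding argument in \cite{BHMMW}, which the authors note already contains the essentials of this lemma.
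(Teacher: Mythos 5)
The paper's own ``proof'' is a one-line citation of Lemma~6.3 of \cite{BHMMW}, so your attempt at a direct argument is genuinely more explicit. Your treatment of the disjoint and transverse cases is correct, and it in fact yields the stronger statement that \emph{every} lift of $\gamma_{i+1}$ is adjacent to \emph{every} lift of $\gamma_i$: in the transverse case the algebraic intersection of any two lifts is again $\pm 1$, all local signs agree, and each of the $m$ lifts of $\gamma_{i+1}$ picks up exactly one of the $m$ preimages of the crossing point lying on the chosen lift of $\gamma_i$. This ``for all lifts'' form is exactly what you need in order to prescribe \emph{both} endpoints $\tilde\alpha$ and $\tilde\beta$ of the lifted path, so that part of the argument is complete.

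The tangential case, however, is not merely an obstacle to your method: the adjacency-lifting claim is \emph{false} there, so neither the perturbation argument nor an analysis of the bounding annulus can establish it. Take $\gamma=\{y=0\}$ and $\gamma'=\{y=\varepsilon\sin^2(\pi x)\}$ in $\T^2$: they meet only at $(0,0)$, yet the lifts $\{y=0\}$ and $\{y=\varepsilon\sin^2(\pi x)\}$ in $\T^2_m=\R^2/m\Z^2$ meet at the $m$ points $(0,0),(1,0),\dots,(m-1,0)$. In particular ``continuity of the adjacency relation'' fails in exactly the way your limiting argument would require: the transverse approximants have lifts meeting once, and $m-1$ extra intersections appear only in the limit. (This example even shows that the inequality of the Lemma, read literally with ``for any respective lifts,'' fails when $d_{C^{\dagger}(\T^2)}(\alpha,\beta)=1$ is realized by a tangency; along such an edge one only gets $d_{C^{\dagger}(\T^2_m)}\le 2$, e.g.\ by routing through one of the $m-1$ lifts of $\gamma'$ disjoint from $\tilde\gamma$ and then to the prescribed lift, which costs an extra edge. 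A bound of the form $d_{C^{\dagger}(\T^2_m)}\le 2\,d_{C^{\dagger}(\T^2)}$ is all that the application in Proposition~\ref{PropParabolicityCriterion} actually requires.) So a complete argument must either treat tangential edges separately and accept a weaker constant, or verify that the cited statement in \cite{BHMMW} really covers the edge relation used here; your final fallback of ``appealing to \cite{BHMMW}'' is what the paper does, but it does not by itself close this gap.
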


\begin{proof} 
This is a straightforward consequence of Lemma 6.3 in \cite{BHMMW}. 
\end{proof}

\begin{lemma} \label{LemGoodCover}
Fix $K>0$. There exists a square-tiled surface $\Sigma(K)$ such that, for any $m>0$ and any $p \in \T ^2_{m}$, there exists a branched covering map $f_{m,p}: \Sigma(K) \rightarrow \T ^2_{m}$, which is branched only at $p$, with the following properties.
\begin{enumerate}
\item For any two essential simple closed curves $\alpha$ and $\beta$ of $\T ^2_{m}$ with $d_{C^{\dagger}(\T ^2_{m})}(\alpha,\beta) \leq K$ and which do not meet the point $p$, there exist lifts of $\alpha$ and $\beta$ to $\Sigma(K)$ which are disjoint.
\item The map $f_{m,p}$ is a local isomorphism of translation surfaces outside $f_{m,p}^{-1}(\{p\})$.
\end{enumerate}
\end{lemma}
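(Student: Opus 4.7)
The plan is to follow the strategy of Section 6 of \cite{BHMMW}, which proves the analogous statement for $\T^2_1$, while keeping track of two extra features: the topological type of $\Sigma(K)$ must depend only on $K$, and the resulting map $f_{m,p}$ must be a local isomorphism of translation surfaces away from $f_{m,p}^{-1}(p)$.

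First I would construct $\Sigma(K)$ as an iterated connected branched double cover. Let $\Sigma(0) = \T^2_m$ and inductively let $\Sigma(k) \to \Sigma(k-1)$ be a connected double cover branched over two points lying above $p \in \T^2_m$. After $K$ such steps, one obtains a surface of degree $2^K$ over $\T^2_m$ branched only above $p$. Pulling back the translation-surface structure of $\T^2_m$ gives $\Sigma(K)$ a natural square-tiled structure, for which the composed map $f_{m,p} : \Sigma(K) \to \T^2_m$ is a local isomorphism of translation surfaces away from the branch locus; this is property (2). Topologically the cover depends only on $K$, so the same surface (up to rescaling of the square tiles) can serve for all $m$ and all $p$.

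To establish property (1) I would proceed by induction on $K$. Given a geodesic $\alpha = \alpha_0, \alpha_1, \dots, \alpha_K = \beta$ in $C^{\dagger}(\T^2_m)$ avoiding $p$, I apply the induction hypothesis to the subgeodesic $\alpha_0, \dots, \alpha_{K-1}$ in $\Sigma(K-1)$ to obtain disjoint lifts $\tilde\alpha_0, \dots, \tilde\alpha_{K-1}$, then lift them arbitrarily to $\Sigma(K)$ via the double cover $\Sigma(K) \to \Sigma(K-1)$. The key step is to choose a lift of $\alpha_K$ disjoint from $\tilde\alpha_{K-1}$: since $\alpha_{K-1}$ and $\alpha_K$ share at most one intersection point $q \neq p$, an arbitrary lift of $\alpha_K$ to $\Sigma(K-1)$ meets the fixed lift of $\tilde\alpha_{K-1}$ in finitely many points, and the additional double cover branched over $p$ can be used to separate these points between two sheets. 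Lemma \ref{LemTopology} enters here, allowing a small homotopy of $\alpha_K$ through the complement of a suitable point so that the resulting pieces lift disjointly in $\Sigma(K)$, producing the desired disjoint lift of $\alpha_K$.

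The main obstacle will be this inductive separation: one needs to organize the sequence of intermediate branch loci so that at every step the intersection points of consecutive curves end up on distinct sheets of the next double cover. I expect Lemma \ref{LemTopology} to provide exactly the local freedom needed, since it asserts that the required base points of the intermediate homotopies can always be chosen in a nonempty open subset of the torus. Once this is set up, the translation-surface compatibility in property (2) follows automatically from the fact that every cover in the tower is branched only at preimages of $p$.
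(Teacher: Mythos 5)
The paper itself disposes of this lemma in one line, citing Lemma 6.4 of \cite{BHMMW}; your outline does follow the same general strategy as that reference (a tower of branched covers of the once-punctured torus, one layer per edge of a geodesic in $C^\dagger$, with the square-tiled structure pulled back so that property (2) is automatic, and with independence of $m$ coming from the normalization that makes $\T^2_m$ isomorphic to $\T^2$ as a translation surface). However, there are two genuine gaps. First, the base of your tower is ill-posed: you ask for a connected double cover of $\Sigma(0)=\T^2_m$ branched over ``two points lying above $p$'', but the fibre of the identity over $p$ is the single point $p$, and no double cover of a surface branched at exactly one point exists (a surjection $\pi_1(\T^2_m\setminus\{p\})\to\Z/2$ must send the sum of the peripheral classes to $0$, so the number of branch points of a double cover is necessarily even). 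So either your first cover is unbranched (and then you must explain what it accomplishes) or your composite $f_{m,p}$ is branched at a point other than $p$, violating the statement.

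Second, and more seriously, the heart of the lemma --- that after passing to a suitable cover branched only over $p$, two essential simple closed curves missing $p$ and meeting at most once acquire \emph{disjoint} lifts --- is exactly the step you do not prove. Saying that ``the additional double cover branched over $p$ can be used to separate these points between two sheets'' is an assertion, not an argument: a component of the preimage of $\alpha_K$ may meet the chosen lift of $\alpha_{K-1}$ in several points, and there is no a priori reason a generic double cover removes all of them. In \cite{BHMMW} (building on the constructions of \cite{BHW}) this is handled by an explicit cover of the once-punctured torus with the disjoint-lifting property for once-intersecting curves, and that construction is the real content of the proof. Finally, Lemma~\ref{LemTopology} is not the tool for this: in the present paper it is used inside the proof of Proposition~\ref{PropParabolicityCriterion} to choose the branch point $p$ so that a homotopy between two arcs survives in $\T^2_m\setminus\{p\}$; it says nothing about separating intersection points of lifts in a cover, so invoking it as the source of ``the local freedom needed'' does not close the gap.
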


\begin{proof}
The proof is almost identical to the proof of Lemma 6.4 in \cite{BHMMW}.
\end{proof}

The following lemma is Lemma 6.5 in \cite{BHMMW}.

\begin{lemma} \label{LemIrrationalLines}
Let $\xi \in \mathbb{R} \setminus \mathbb{Q}$ and $\Sigma$ be a square-tiled surface. There exists $L' >0$ such that any line segment in $\Sigma$ of slope $\xi$ and of length greater than $L'$ meets any horizontal closed curve.
\end{lemma}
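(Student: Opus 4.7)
The plan is to decompose $\Sigma$ into its finitely many horizontal cylinders $C_1,\ldots,C_N$, of heights $h_1,\ldots,h_N$, and reduce the claim to a statement about cylinder traversal. Every essential horizontal closed curve is a closed leaf of the horizontal foliation and therefore lies in the interior of some $C_i$ at a definite transversal height $y_c\in(0,h_i)$. Any slope-$\xi$ segment that contains a subsegment joining the two horizontal boundary components of $C_i$ meets this leaf: along such a subsegment the transversal coordinate varies continuously and monotonically from $0$ to $h_i$ (since $\xi\ne 0$), hence takes every value in $(0,h_i)$, in particular $y_c$. It is therefore enough to find $L'$ such that any slope-$\xi$ segment of length $\ge L'$ fully traverses every cylinder $C_i$.

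For this I would appeal to the Veech dichotomy: square-tiled surfaces are Veech surfaces, and since $\xi$ is irrational (so not a completely periodic direction), the straight-line flow on $\Sigma$ in direction $\xi$ is uniquely ergodic, in particular minimal. Combined with the compactness of $\Sigma$, minimality yields a uniform entrance time $L_1>0$ such that every slope-$\xi$ orbit segment of length $\ge L_1$ meets the interior of each $C_i$. Set $H=\max_i h_i/|\sin\theta|$, where $\theta$ is the angle with $\tan\theta=\xi$, and take $L'=L_1+2H$. For any slope-$\xi$ segment of length $\ge L'$, its subsegment $[H,H+L_1]$ has length $L_1$ and sits at distance at least $H$ from either endpoint, so it meets each $C_i$ at some interior time $t^\ast$; writing the corresponding full vertical traversal $[t^\ast-a,t^\ast+b]$ with $a,b\ge 0$ and $a+b=h_i/|\sin\theta|\le H$, we get $[t^\ast-a,t^\ast+b]\subseteq[0,L_1+2H]=[0,L']$, so the full traversal of $C_i$ takes place within the original segment.

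The step I expect to be most delicate is the appeal to uniform recurrence via the Veech dichotomy: minimality of the slope-$\xi$ flow on the finite branched cover $\Sigma\to\T^2$ does not follow purely formally from minimality on the base torus (unique ergodicity need not lift to arbitrary finite covers), so genuine use must be made of the square-tiled (Veech) structure of $\Sigma$. A secondary technical point is that slope-$\xi$ segments passing through conical singularities, where the directional flow is multi-valued, require some care; this is harmless since singular points never belong to any horizontal closed curve, and such segments can be approximated by nearby generic ones.
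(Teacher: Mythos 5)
Your proposal is correct in substance, but note first that the paper itself offers no proof of this statement: it is imported verbatim as Lemma~6.5 of \cite{BHMMW}, so the comparison is really with the argument there, which (like yours) goes through the horizontal cylinder decomposition and the dynamics of the linear flow in direction $\xi$. Your reduction is the right one: every horizontal closed curve is a closed leaf sitting at some height in the interior of a horizontal cylinder $C_i$ (the cylinder interiors contain no singularities), a maximal slope-$\xi$ subsegment inside $C_i$ has strictly monotone transversal coordinate and hence is a full bottom-to-top traversal meeting every such leaf, and your bookkeeping $L'=L_1+2H$ correctly upgrades ``enters $\operatorname{int} C_i$ at an interior time'' to ``contains a full traversal of $C_i$''. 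Two remarks on the dynamical input. First, the Veech dichotomy is heavier machinery than necessary: on a square-tiled surface every saddle connection has holonomy in $\mathbb{Z}^2$, hence rational slope, so an irrational direction carries no saddle connection and Keane's criterion already yields minimality of the linear flow, in the strong form that every infinite separatrix is dense. Second, the step you rightly flag as delicate --- extracting the uniform entrance time $L_1$ from ``minimality plus compactness'' --- is exactly where the singularities intervene: the usual finite-subcover argument with the sets $\phi_{-t}(\operatorname{int} C_i)$ breaks down because the flow is not everywhere defined. The standard repair is to suppose no uniform $L_1$ exists, take midpoints $y_k=\phi_{T_k/2}(x_k)$ of offending segments of lengths $T_k\to\infty$, pass to a limit point $y$ whose orbit avoids $\operatorname{int} C_i$ wherever defined, and note that since there is no saddle connection in direction $\xi$ at least one half-orbit of $y$ is infinite, hence dense --- a contradiction. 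With that point made explicit, your proof is complete and self-contained, which is more than the paper provides.
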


As a consequence of the above Lemma, we obtain the following corollary.

\begin{corollary} \label{CorIrrationalLines}
Fix $\xi$ and $\Sigma$ as in the above lemma and take $L=2L'$, where $L'$ is given by the above lemma. Then there exists $\varepsilon >0$ such that the following property holds. For any line segment $S$ which is $\varepsilon$-close, for the Hausdorff distance, to a line segment of slope $\xi$  and of length greater than $L$, any path which is homotopic to $S$ with fixed extremities in the complement of singular points in $\Sigma$ meets any horizontal closed curve.
\end{corollary}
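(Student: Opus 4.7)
The plan is to show that for $\varepsilon$ sufficiently small, the algebraic intersection number $\iota(S,h) \in \mathbb{Z}$ is nonzero for every horizontal closed curve $h$ of $\Sigma$. Since horizontal closed curves lie in the interior of the horizontal cylinders of $\Sigma$ and hence in $\Sigma \setminus \mathrm{Sing}$, this intersection number is invariant under homotopy rel endpoints in $\Sigma \setminus \mathrm{Sing}$ (for paths whose endpoints stay off $h$); consequently $\iota(S,h) \neq 0$ immediately yields $S' \cap h \neq \emptyset$ for every $S'$ homotopic to $S$ rel endpoints in $\Sigma \setminus \mathrm{Sing}$.

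The first step is to locate a crossing of $T$ with $h$ well inside $T$. Let $\theta \in (0, \pi/2)$ be the angle between direction-$(1,\xi)$ lines and horizontal lines; $\sin\theta > 0$ depends only on $\xi$. For $\varepsilon$ small enough that $\varepsilon_0 := 3\varepsilon/\sin\theta < L'/2$, the sub-segment of $T$ obtained by trimming arclength $\varepsilon_0$ from each end has length $\ell - 2\varepsilon_0 > 2L' - L' = L'$, and hence meets $h$ by Lemma~\ref{LemIrrationalLines}. Pick such a crossing $a$; it lies at arclength at least $\varepsilon_0$ from both endpoints of $T$. Next I deduce that $S$ itself crosses $h$ near $a$: consider the short sub-segment $T^{**}$ of $T$ of arclength $\varepsilon_0$ centered at $a$, with endpoints $p_\pm$. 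In the flat structure, $p_+$ and $p_-$ lie on opposite sides of $h$, each at perpendicular distance $(\varepsilon_0/2)\sin\theta = 3\varepsilon/2$ from $h$. By Hausdorff closeness, $S$ contains points $p_\pm'$ with $d(p_\pm, p_\pm') \leq \varepsilon < 3\varepsilon/2$, still on the same sides of $h$ as $p_\pm$; therefore the sub-arc of $S$ joining $p_-'$ to $p_+'$ must cross $h$. Since $S$ is a line segment of slope close to $\xi \neq 0$, its transverse crossings with the horizontal $h$ all carry the same sign, so $\iota(S,h) = \pm \#(S \cap h) \neq 0$.

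The main technical obstacle is to make the ``opposite sides'' and ``perpendicular distance'' statements rigorous and uniform in the choice of $T$ and $h$, since $a$ may lie arbitrarily close to a conical singularity of $\Sigma$ and the local flat chart around $a$ may fail to contain $T^{**}$. I plan to work throughout in the universal cover $\tilde{\Sigma}$: the lift $\tilde{h}$ of $h$ through the lift $\tilde{a}$ of $a$ is a properly embedded geodesic separating the simply connected $\tilde{\Sigma}$ into two components, so that the endpoints of $\tilde{T}^{**}$ lie unambiguously in opposite components. The subtleties are twofold: first, the $\varepsilon$-tube around $T^{**}$ in $\Sigma$ must lift as an embedded disk, so that the lift of the corresponding portion of $S$ stays near $\tilde{T}^{**}$; second, CAT(0) shortcuts through cone points of angle $> 2\pi$ could in principle reduce the effective distance from $\tilde{p}_\pm$ to $\tilde{h}$ below $3\varepsilon/2$. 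I expect both issues to be handled by exploiting the finite combinatorics of the horizontal cylinder decomposition of the square-tiled surface $\Sigma$ and the compactness of $\Sigma$, which together provide uniform geometric bounds allowing a single $\varepsilon$ to work for all $T$ of slope $\xi$ length $> L$ and all horizontal closed curves $h$.
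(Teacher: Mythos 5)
Your overall strategy is sound and genuinely different from the paper's, but as written it stops short of a proof precisely at the point where the content lies. You reduce everything to two geometric claims — that the $\varepsilon$-tube around $T^{**}$ lifts so that $p'_-$ and $p'_+$ are separated by the \emph{same} lift $\tilde h$, and that cone points cannot destroy the lower bound $d(p_\pm,\tilde h)\ge 3\varepsilon/2$ — and then write that you ``expect both issues to be handled'' by compactness and the combinatorics of $\Sigma$. That expectation is not a proof: the crossing point $a$ may be arbitrarily close to a singularity, so there is no uniform flat chart containing $T^{**}$, and one must actually produce the uniform constants (in terms of the systole of $\Sigma$, $\sin\theta$, and the spacing between consecutive crossings of $h$ by a slope-$\xi$ line, so that $T^{**}$ meets $\tilde h$ only at $\tilde a$). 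These estimates are doable for a square-tiled surface, but they are the entire difficulty of the statement, and the proposal leaves them open. A smaller point: you assert ``$\varepsilon < 3\varepsilon/2$, still on the same sides of $h$,'' but ``sides of $h$'' is only meaningful for a fixed lift in the universal cover, and the nearby point $p'_\pm$ given by Hausdorff closeness must be shown to lift next to $\tilde p_\pm$; this is the same lifting issue again.

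For comparison, the paper avoids all local estimates by a compactness argument: it introduces the family $\mathcal{K}$ of slope-$\xi$ configurations of total length $L=2L'$, consisting of honest segments \emph{and} of unions of two slope-$\xi$ segments meeting at a singularity (these are exactly the Hausdorff limits of slope-$\xi$ segments of length $L$). Every element of $\mathcal{K}$ contains an unbroken slope-$\xi$ segment of length $\ge L'$, hence meets every horizontal curve by Lemma~\ref{LemIrrationalLines}; each element then admits a private $\varepsilon_{S'}$ for which nearby line segments still meet every horizontal curve, and compactness of $\mathcal{K}$ makes $\varepsilon$ uniform. Note that the constant $L=2L'$ plays a different role in the two arguments: for you it is room to trim $\varepsilon_0$ off each end of $T$; for the paper it guarantees that a configuration broken once at a singularity retains a piece of length $\ge L'$. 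Both proofs finish identically, via the homotopy invariance (rel endpoints, in the complement of the singular points) of the nonzero algebraic intersection number with a horizontal curve, which is why transversality and the sign argument matter. If you want to complete your route, the cleanest fix is to borrow the paper's compactness step rather than chase the explicit constants near cone points.
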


\begin{proof} 
Consider the set $\mathcal{K}$ consisting of compact connected subsets of $\Sigma(K)$ which are either segments of length $L$ and slope $\xi$ or a union of two segments of $\Sigma$ of slope $\xi$ with a common extremity which is a singularity of $\Sigma$ and whose total length (\emph{i.e.} the sum of the lengths of those segments) is equal to $L$. By Lemma \ref{LemIrrationalLines}, any element of $\mathcal{K}$ meets any horizontal closed curve of $\Sigma$. Moreover, the set $\mathcal{K}$ is compact for the Hausdorff topology.

For any element $S'$ of $\mathcal{K}$, there exists $\varepsilon_{S'} >0$ such that any line segment $S$ which is $\varepsilon_{S'}$-close to $S'$ meets any horizontal curve.  By compactness of $\mathcal{K}$, we can take $\varepsilon_{S'}= \varepsilon$ independent of $S'$. Observe that any such segment $S$ as above has a nonzero algebraic intersection number with any horizontal curve so that Corollary \ref{CorIrrationalLines} holds. 
\end{proof}

\begin{proof}[Proof of Proposition \ref{PropParabolicityCriterion}]
We prove this proposition by contradiction. Suppose that $f$ acts elliptically on $C^{\dagger}(\T ^2)$. We denote by $\alpha$  the curve $t \mapsto (t,0)$ on $\T ^2$ and by $\tilde{\alpha}$ its lift $t \mapsto (t,0)$ to $\mathbb{R}^2$. Then there exists $K'>0$ such that, for any $n \geq 0$
$$ d_{C^{\dagger}(\T ^2)}\big(\alpha,f^{n}(\alpha)\big) \leq K'.$$
Let $K=K'+1$ and observe that, for any essential simple closed curve $\beta$ which is disjoint from $\alpha$, 
$$ d_{C^{\dagger}(\T ^2)}\big(\alpha,f^{n}(\beta)\big) \leq K.$$

Apply Lemma \ref{LemGoodCover} to obtain a square tiled surface $\Sigma(K)$. Take $\theta \in \mathbb{R} \setminus \mathbb{Q}$ such that the set $B_{\infty}$ contains  a nontrivial segment with irrational slope $\theta$. Apply Corollary~\ref{CorIrrationalLines} with this slope and the surface $\Sigma(K)$.
This corollary gives a number $L>0$. Fix $M>0$ in such a way that the set $M B_{\infty}$ contains an irrational segment $S_\infty$ with length $>L$ and slope $\theta$. Take a compact subset $C$ whose interior contains this segment $S_\infty$. Then the sequence of subsets
$$ \left( \frac{1}{\lfloor \frac{a_k}{M} \rfloor}\Big(\tilde{f}^{n_k}(D)-\tilde{f}^{n_k}(x_0)+M \Big\lfloor \frac{a_{k}}{M} \Big\rfloor w_{k}\Big) \right)_k $$
converges to the subset $M B_\infty$ for the Hausdorff topology. Take an integer $k_0$ sufficiently large so that the set
$$\frac{1}{\lfloor \frac{a_{k_0}}{M} \rfloor}\Big(\tilde{f}^{n_{k_0}}(D)-\tilde{f}^{n_{k_0}}(x_0)+M \big\lfloor \frac{a_{k_0}}{M} \big\rfloor w_{k_0}\Big) \cap C$$ 
is $\varepsilon$-close to the set $MB_\infty \cap C$, where $\varepsilon $ is given by Corollary \ref{CorIrrationalLines} and observe that the set
$$\frac{1}{\lfloor \frac{a_{k_0}}{M} \rfloor}\big(\tilde{f}^{n_{k_0}}(D)-\tilde{f}^{n_{k_0}}(x_0)\big) \cap (C-M w_{k_0})$$
is $\varepsilon$-close to the set $M(B_\infty-w_{k_0}) \cap (C-M w_{k_0})$. Fix $m=\lfloor \frac{a_{k_0}}{M} \rfloor$. 

Hence there exist two points $x,y \in D$ such that the point $\tilde{f}^{n_{k_0}}(x)-\tilde{f}^{n_{k_0}}(x_0)$ is $m\varepsilon$-close to one extremity of the segment $m(S_\infty-Mw_{k_0})$ and the point $\tilde{f}^{n_{k_0}}(y)-\tilde{f}^{n_{k_0}}(x_0)$ is $m\varepsilon$-close to the other extremity of this segment. Moreover, we choose those points $x$ and $y$ outside any lift of $\alpha$. 
In particular, the point $p_{m}(\tilde{f}^{n_{k_0}}(x)-\tilde{f}^{n_{k_0}}(x_0))$ is $\varepsilon$-close to one extremity of the segment $S=p_{m}(m(S_\infty-Mw_{k_0}))$ and the point $p_{m}(\tilde{f}^{n_{k_0}}(y)-\tilde{f}^{n_{k_0}}(x_0))$ is $\varepsilon$-close to the other extremity of this segment.
Observe that the length of the segment $S$ is greater than $L$. Denote by $S'$ the line segment in $\T ^2_{m}$ joining these two points that remains $\varepsilon$-close to $S$ and let $S''=S'+p_{m}(\tilde{f}^{n_{k_0}}(x_0))$. Observe that the segment $S''$ is $\varepsilon$-close to a segment of the same slope and same length as $S$.

Take a simple path $\tilde{\gamma}$ contained in $D$ whose extremities are the points $x$ and $y$ and that does not meet any lift of $\alpha$. Fix an essential simple closed curve $\beta$ of $\T ^2$ homotopic to $\alpha$ that is disjoint from $\alpha$ and has a lift to $\mathbb{R}^2$ containing the path $\tilde{\gamma}$. Observe that the path $p_{m}(\tilde{f}^{n_{k_0}}(\tilde{\gamma}))$ is homotopic with fixed extremities to $S''$ in $\T ^2_{m}$ (because those paths admit  lifts to $\mathbb{R}^2$ with the same endpoints). 
By Lemma \ref{LemTopology}, there exists a point $p$ of $\T^{2}_{m}$, which neither belongs to any lift of $\alpha$ nor to any lift of $\beta$, such that those two paths are still homotopic with fixed extremities in $\T^{2}_{m} \setminus \left\{ p \right\}$.
By Lemma \ref{LemGoodCover}, there exists a covering map $\Sigma(K) \rightarrow \T ^{2}_{m}$ which is ramified only at the point $p$. By Corollary~\ref{CorIrrationalLines}, any lift of the curve $p_{m}(\tilde{f}^{n_{k_0}}(\tilde{\beta}))$ to the surface $\Sigma (K)$ meets any horizontal curve of $\Sigma(K)$. However, the curve $p_{m}(\tilde{f}^{n_{k_0}}(\tilde{\beta}))$ is a lift of the curve $f^{n_{k_0}}(\beta)$ to $\T^{2}_{m}$ so that, by Lemma \ref{LemLift},
$$ d_{C^{\dagger}(\T^{2}_{m})}\big(p_{m}(\tilde{f}^{n_{k_0}}(\tilde{\beta})),p_{m}(\tilde{\alpha})\big) \leq K.$$
Hence the curves $p_{m}(\tilde{f}^{n_{k_0}}(\tilde{\beta}))$ and $p_{m}(\tilde{\alpha})$ must admit disjoint lifts to $\Sigma(K)$, which is a contradiction as the latter curve is horizontal.
\end{proof}

\section{Possible directions of good limit values}\label{SecPossible}

The following proposition gives two possibilities for the possible shapes of the good limit values of the sequence $(A_n)_{n}$.

\begin{proposition}\label{PropDichot}
Let $f \in \mathrm{Homeo}_0(\T ^2)$.
One of the following holds.
\begin{enumerate}
\item There exists a good limit value of the sequence $(A_n)_{n }$ which contains a nontrivial segment with irrational slope.
\item There exists a line with rational slope which contains any good limit value of the sequence $(A_n)_{n}$.
\end{enumerate}
\end{proposition}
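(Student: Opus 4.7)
I plan to proceed by contrapositive: assume case~(1) fails and derive case~(2). First, I classify the good limit values. By Lemma~\ref{LemConvexlimit}, each $A\in\mathcal{L}$ is a convex compact subset of $\bar B(0,1)$ containing $0$ and of diameter $1$. A convex set with nonempty interior in $\R^2$ contains chords of every direction (including irrational ones), so the failure of~(1) forces each $A\in\mathcal{L}$ to have empty interior; combined with the other constraints, this means $A$ is a nontrivial segment through $0$ of length $1$ in some direction $v_A$, which must be rational (else $A$ itself would be a segment of irrational slope, contradicting our assumption). The remaining work is to show that all such directions $v_A$ coincide, so that~(2) holds with the common line $\R v_A$ through $0$.

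The key tool is a dynamical continuity estimate. Since $f\in\Homeo_0(\T^2)$, we may write $\tilde f=\Id+g$ for a continuous $\Z^2$-periodic $g:\R^2\to\R^2$ with $\|g\|_\infty\le C$ for some $C>0$. A short computation gives
\[
 |d_{n+1}-d_n|\le 2C, \qquad d_H(A_n,A_{n+1})\le \frac{4C}{d_n},
\]
so $A_n$ evolves slowly whenever $d_n$ is large. In particular, on any stretch of consecutive indices $n$ where $d_n\ge N$, the shape varies by at most $4C/N$ per step.

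With these tools I will show that $\mathcal{L}$ is connected in the Hausdorff topology. Granted this, the slope map — continuous on the space of unit-length segments through~$0$ — sends $\mathcal{L}$ to a connected subset of the totally disconnected set of rational slopes $\Q\cup\{\infty\}$, hence to a single value, giving~(2). For the connectedness itself I would argue by contradiction: if $\mathcal{L}=L_1\sqcup L_2$ with $d_H(L_1,L_2)=3\delta>0$, then for $N$ large enough every $A_n$ with $d_n\ge N$ lies within $\delta$ of $L_1\cup L_2$ and $4C/N<\delta$, so on each maximal "burst" where $d_n\ge N$ the shape $A_n$ remains close to exactly one of $L_1$ or $L_2$. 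The main obstacle is ruling out direction changes occurring during the "valleys" $d_n<N$ between bursts, where the normalized shape is unconstrained by the continuity estimate and could in principle jump between the two types. To exclude this, I would exploit the dynamical structure: at the end of an $L_1$-burst, the set $\tilde f^n(D)$ is a long segment in a rational direction $v_1$ whose projection to $\T^2$ is a closed curve, and the $\Z^2$-periodic displacement $\tilde f^t-\Id$ of norm $\le tC$ is too rigid to transform this into a long segment in a different rational direction $v_2$ without passing, at some intermediate index of still-large diameter, through a rescaled shape that is either two-dimensional or aligned in a third, distinct direction; either alternative produces a new good limit value with nonempty interior or with an irrational slope, contradicting the failure of~(1) and establishing the connectedness of $\mathcal{L}$.
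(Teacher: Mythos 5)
Your first steps match the paper's: classifying the good limit values as rational segments of diameter $1$ containing $0$, establishing the one-step continuity estimate $d_H(A_n,A_{n+1})=O(1/d_n)$, and concluding that the direction of $A_n$ varies slowly on any maximal stretch ("excursion") where $d_n$ is large. The paper does exactly this (its Claim and Lemma~\ref{LemExcursion}). You also correctly identify the real obstacle: nothing so far prevents the diameter from collapsing between two excursions and re-growing in a different rational direction.

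However, your proposed resolution of that obstacle is where the proof breaks down. You assert that passing from a $v_1$-aligned long segment to a $v_2$-aligned one would force "some intermediate index of still-large diameter" whose rescaled shape is two-dimensional or aligned in a third direction. That is precisely what need not happen: in a valley the diameter drops to a bounded size, so there is no intermediate index of large diameter at all, and when $d_n$ is bounded the set $A_n$ is not constrained to be close to any segment. Neither the continuity estimate nor the classification of good limit values applies there, so no new good limit value with irrational slope or nonempty interior is produced. Relatedly, aiming at connectedness of the set $\mathcal{L}$ of good limit values is both stronger than needed and not what the dynamics gives you (e.g.\ two opposite sub-segments of the same line could both occur as limit values); what must be shown is only that the \emph{direction} is constant.

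The missing idea, which is the heart of the paper's proof, is to exploit the equivariance $\tilde f\circ T_v=T_v\circ\tilde f$ for $v\in\Z^2$ to \emph{transport} the growth from one excursion to another. One chooses an excursion $[n_1,n_2]$ with direction near $\theta_1$, starting with $d_{n_1}\le 2R$ and ending with $d_{n_2}\ge 4R/\varepsilon$, and a second excursion $[n'_1,n'_2]$ with direction near $\theta_2$, with $d_{n'_1}\le 2R$ and the \emph{same length} $n'_2-n'_1=n_2-n_1$. Taking $x_1\in D$ realizing half the diameter of $\tilde f^{n_2}(D)$, the points $\tilde f^{n_1}(x_0)$ and $\tilde f^{n_1}(x_1)$ are within $2R$ of each other, hence integer translates of them lie in the single fundamental domain $\tilde f^{n'_1}(D)$. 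Applying $\tilde f^{n_2-n_1}$ and using the commutation with integer translations, $\tilde f^{n'_2}(D)$ must contain two points whose separation transverse to the $\theta_2$-direction is at least $\frac{d_{n_2}}{2}\sin\alpha-4R\ge 2\varepsilon d_{n_2}$, which is compatible with $(P_{n'_2,\theta,\varepsilon})$ only if $d_{n'_2}\ge 2d_{n_2}$. Exchanging the roles of $\theta_1$ and $\theta_2$ gives $d_{n_2}\ge 2d_{n'_2}$, a contradiction. Without an argument of this kind (or some substitute that genuinely uses the group structure rather than only the metric continuity of the normalized shapes), your proof of case~(2) is incomplete.
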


\begin{proof}
Suppose that 1.~does not hold. Then any good limit value is a convex set with empty interior (any convex set with nonempty interior contains a nontrivial segment with irrational slope). Hence, any good limit value is a segment of rational slope, contained in the closed unit disk, and having diameter 1. 

We argue by contradiction by supposing that there are at least two different rational directions containing a good limit value.
Call these directions $\theta_1,\theta_2\in\Pj(\R^2)$. 

We endow $\Pj(\R^2)$ with a distance $\delta$ making it homeomorphic to the circle.
For $\theta\in\Pj(\R^2)$, denote $L_\theta$ the line of direction $\theta$ passing by 0.

As a first step, we state that if $d_n$ is large enough, then $A_n$ is very close to a rational segment. This follows from a simple compactness argument.

\begin{claim}
For any $\varep>0$, there exists $R>0$ such that if $d_n>R$, then there exists $\theta\in\Pj(\R^2)$ rational such that the following holds:
\begin{equation}\tag{$P_{n,\theta,\varep}$}\label{PropP}
\forall x\in A_n,\, d(x,L_\theta)\le\varep.
\end{equation}
\end{claim}

\begin{proof}
Suppose the claim is false. Then there exists $\varep_0>0$ and a subsequence $(n_k)$ such that $d_{n_k}\to+\infty$, and that for any rational direction $\theta\in\Pj(\R^2)$, there exists $x\in A_{n_k}$ with $d(x,L_\theta) > \varepsilon_0$.

By taking a subsequence of $(n_k)_k$ if necessary, one can suppose that $d_{n_k}\ge k$ and that the sets $A_{n_k}$ converge towards some compact set for Hausdorff topology. By the discussion at the beginning of the proof of the proposition, this set can only be a segment with rational slope $\theta_0$, \emph{i.e.} contained in some line $L_{\theta_0}$. Hence, if $k$ is large enough, for any $x\in A_{n_k}$ we have $d(x,L_{\theta_0})\le\varep_0$. This is a contradiction.
\end{proof}

It allows to define the following.

\begin{definition}
A \emph{main direction} of $A_n$ for $\varep>0$ is a direction $\theta$ such that $(P_{n,\theta,\varep})$ holds. 

An \emph{$R$-excursion} is any integer interval $[n_1,n_2]$ on which any $n \in [n_1,n_2]$ satisfies $d_n \ge R$.
\end{definition}

The following says that if $R$ is large enough, then on any $R$-excursion, the main directions of $A_n$ cannot vary a lot. 

\begin{lemma}\label{LemExcursion}
For any $\delta_0>0$, there exists $\varep>0$ and $R>0$ satisfying: if $[n_1,n_2]$ is an $R$-excursion, then for any $n,n'\in [n_1,n_2]$ and any $\theta,\theta'$ such that $(P_{n,\theta,\varep})$ and $(P_{n',\theta',\varep})$ hold, we have $\delta(\theta,\theta')<\delta_0$.
\end{lemma}

Note that if the conclusion of the lemma holds for some $\varep>0$, then it holds for any $0<\varep'<\varep$.

\begin{proof}
Suppose it is false. Then there exists $\delta_0>0$ such that for any $k>0$, there is a $k$-excursion $[n_1^k,n_2^k]$ and two directions $\alpha_1^k,\alpha_2^k$ with $\delta(\alpha_1^k,\alpha_2^k)\ge\delta_0$ such that $(P_{n_1^k,\alpha_1^k,1/(2k)})$ and $(P_{n_2^k,\alpha_2^k,1/(2k)})$ hold. By extracting a subsequence, one can suppose that $(\alpha_1^k,\alpha_2^k)$ converge towards $(\alpha_1,\alpha_2)$, and more precisely that $\delta(\alpha_1^k,\alpha_1)\le 1/(2k)$ and $\delta(\alpha_2^k,\alpha_2)\le 1/(2k)$. In this case, for $k$ large enough, $(P_{n_1^k,\alpha_1,1/k})$ and $(P_{n_2^k,\alpha_2,1/k})$ hold.
 
We now prove that for $n\in [n_1^k,n_2^k]$, the set of main directions cannot vary a lot between times $n$ and $n+1$. Indeed, calling $K_f = 2\max\big(d(\tilde f,\Id_{\R^2}), d(\tilde f^{-1},\Id_{\R^2})\big)$, we have
\[d_H\big(\tilde f^n(D)-\tilde f^n(\tilde x_0),\tilde f^{n+1}(D) - \tilde f^{n+1}(\tilde x_0)\big) \le K_f\]
Hence,

\begin{equation}\label{EqMajorKf1}
d_H\left(\frac{\tilde f^n(D)-\tilde f^n(\tilde x_0)}{d_n},\frac{\tilde f^{n+1}(D) - \tilde f^{n+1}(\tilde x_0)}{d_n}\right) \le \frac{K_f}{d_n}.
\end{equation}
But we also have $|d_n-d_{n+1}| \le K_f$, so $|1-d_{n+1}/d_n| \le K_f/d_n$. The fact that --- given a compact subset $A$ of the unit disc --- the map $\R_+\ni\lambda \mapsto \lambda A$ is $1$-Lipschitz for Hausdorff distance $d_H$ implies that 
\begin{align*}
d_H\bigg(\frac{\tilde f^{n+1}(D) -\tilde f^{n+1}(\tilde x_0)}{d_n} & ,\frac{\tilde f^{n+1}(D) - \tilde f^{n+1}(\tilde x_0)}{d_{n+1}}\bigg)\\
& = d_H\left(\frac{d_{n+1}}{d_n}\frac{\tilde f^{n+1}(D)-\tilde f^{n+1}(\tilde x_0)}{d_{n+1}},\frac{\tilde f^{n+1}(D) - \tilde f^{n+1}(\tilde x_0)}{d_{n+1}}\right)\\
& \le \left|1-\frac{d_{n+1}}{d_n}\right| \le \frac{K_f}{d_n}.
\end{align*}
Combined with \eqref{EqMajorKf1}, by triangle inequality, this gives
\[d_H(A_n,A_{n+1}) = d_H\left(\frac{\tilde f^n(D)-\tilde f^n(\tilde x_0)}{d_n},\frac{\tilde f^{n+1}(D) - \tilde f^{n+1}(\tilde x_0)}{d_{n+1}}\right) \le \frac{2K_f}{d_n}.\]

Using the fact that $d_n$ tends to infinity, we deduce that the Hausdorff distance between $A_n$ and $A_{n+1}$ is in $O(1/k)$ (recall that $n\in [n_1^k,n_2^k]$).
Hence, if $(\theta_n)$ and $(\theta'_n)$ are such that $(P_{n,\theta_n,1/k})$ and $(P_{n+1,\theta'_n,1/k})$ hold, then $\delta(\theta_n,\theta'_n)$ tends to 0 as $k$ tends to infinity. This implies for any $\eta>0$, the directions $\alpha_1$ and $\alpha_2$ can be joined by an $\eta$-chain consisting of accumulation points of sequences $(\theta_{n_k})$, where $n_k \in [n_1^k,n_2^k]$, and $(P_{n_k,\theta_{n_k},1/k})$ holds.

This proves that the set of $\theta\in\Pj(\R^2)$ satisfying the following property contains an interval containing both $\alpha_1$ and $\alpha_2$: there exists a sequence $n_k \in[n_1^k,n_2^k]$ such that $(P_{n_k,\theta_{n_k},1/k})$ holds, $d_{n_k} \geq k$ and $\theta_{n_k} \to \theta$.
This is a contradiction as such an interval has to contain an irrational direction. 
\end{proof}

We are now ready to end the proof of Proposition~\ref{PropDichot}. 
The idea is that the images of the fundamental domain cannot grow in two different directions on two $R$-excursions of the same length. We first build such two $R$-excursions, and then use them to get to a contradiction.

Let $\delta_0<\delta(\theta_1,\theta_2)/3$, and consider 
\[\alpha = \min\{\angle(\theta,\theta')\mid \delta(\theta_1,\theta)<\delta_0, \delta(\theta_2,\theta')<\delta_0\}.\]
Take $\varep>0$ such that $\sin\alpha\ge 6\varep$.

Consider $R$ associated to $\varep$ and $\delta_0$ as in Lemma~\ref{LemExcursion}. Increasing $R$ if necessary, one can suppose that $R\ge K_f$, with $K_f = 2\max\big(d(\tilde f,\Id_{\R^2}), d(\tilde f^{-1},\Id_{\R^2})\big)$.

We now consider two $R$-excursions of the same length, one with main directions close to $\theta_1$, the other with main directions close to $\theta_2$; we moreover suppose that on the first one, the diameter increases a lot. More precisely, we first prove that there exists $n_1\le n_2$ such that: 
\begin{itemize}
\item $[n_1,n_2]$ is an $R$-excursion;
\item $d_{n_2}\ge 4R/\varep$;
\item $d_{n_1}\le 2R$;
\item for any $n\in [n_1,n_2]$, there exists $\theta\in\Pj(\R^2)$ such that $\delta(\theta_1,\theta)<\delta_0$ and $(P_{n,\theta,\varep})$ holds.
\end{itemize} 
Indeed, by Lemma~\ref{LemExcursion}, for any $R'>R$, as by hypothesis there is at least two different rational directions containing a good limit value, there exists an infinite number of $R'$-excursions. Moreover, still by Lemma~\ref{LemExcursion}, there exists an infinite number of $R'$-excursions $[n_1,n_2]$ such that for any $n\in [n_1,n_2]$ and any $\theta$ such that $(P_{n,\theta,\varep})$ holds, we have $\delta(\theta,\theta_1)<\delta_0$.

So we can consider such a $4R/\varep$-excursion $[\tilde n_1,n_2]$, and $n_1$ the minimal integer such that $[n_1,n_2]$ is a $R$-excursion. Trivially, $[n_1,n_2]$ satisfies the two firsts points and the last point. But, as we have already seen, $|d_{n_1}-d_{n_1-1}| \le K_f$, so $d_{n_1}\le d_{n_1-1} + K_f\le 2R$.

As, for any $n$, $|d_{n+1}-d_n| \leq K_f$, observe that there are arbitrarily long $R$-excursions with one element $n$ of the excursion satisfying $(P_{n,\theta_2,\varep})$. As a consequence, using Lemma~\ref{LemExcursion}, one can similarly find $n'_1\le n'_2$ such that:
\begin{itemize}
\item $[n'_1,n'_2]$ is an $R$-excursion;
\item $n'_2-n'_1 = n_2-n_1$;
\item $d_{n'_1}\le 2R$;
\item for any $n\in [n'_1,n'_2]$, there exists $\theta\in\Pj(\R^2)$ such that $\delta(\theta_2,\theta)<\delta_0$ and $(P_{n,\theta,\varep})$ holds.
\end{itemize} 
Indeed, consider an $R+(n_2-n_1)K_f$-excursion $[\tilde n'_1, \tilde n'_2]$, consider $n'_1$ the minimal integer such that $[n'_1,\tilde n'_2]$ is an $R$-excursion, and set $n'_2 = n'_1+(n_2-n_1)$; the hypothesis on the size of the excursion ensures that $[n'_1,n'_2]$ is a $R$-excursion.
\bigskip

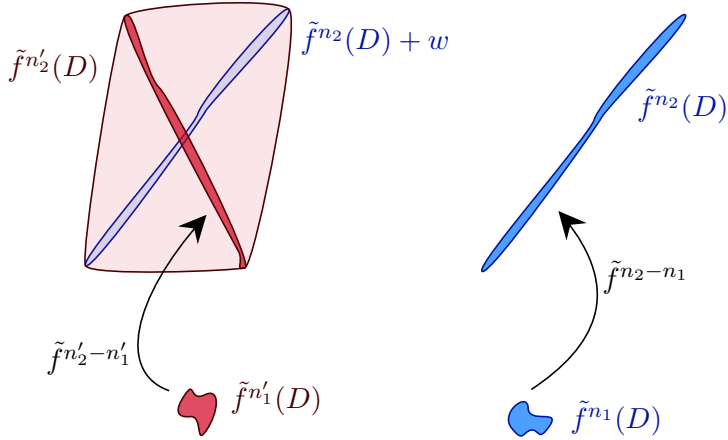
\begin{figure}
\begin{center}
\tikzset{every picture/.style={line width=0.6pt}} %set default line width to 0.75pt        

\begin{tikzpicture}[x=0.75pt,y=0.75pt,yscale=-1.3,xscale=1.3]
%uncomment if require: \path (0,300); %set diagram left start at 0, and has height of 300

%Shape: Polygon Curved [id:ds9760823370427512] 
\draw  [color={rgb, 255:red, 0; green, 20; blue, 157 }  ,draw opacity=1 ][fill={rgb, 255:red, 0; green, 118; blue, 255 }  ,fill opacity=0.15 ] (191.65,91.68) .. controls (193.21,85.65) and (224.13,48.73) .. (226.58,51.21) .. controls (229.03,53.68) and (196.52,86.9) .. (194.46,90.9) .. controls (192.4,94.89) and (151.33,152.96) .. (148.58,150.46) .. controls (145.83,147.96) and (190.09,97.71) .. (191.65,91.68) -- cycle ;
%Shape: Polygon Curved [id:ds3849531167938992] 
\draw  [color={rgb, 255:red, 70; green, 0; blue, 0 }  ,draw opacity=1 ][fill={rgb, 255:red, 208; green, 0; blue, 30 }  ,fill opacity=.7 ] (186.69,200.38) .. controls (190.78,194.38) and (188.23,203.37) .. (194.38,200.38) .. controls (200.54,197.4) and (199.6,207.28) .. (195,214.08) .. controls (190.4,220.88) and (193.95,208.32) .. (188.69,208.85) .. controls (183.43,209.37) and (182.6,206.38) .. (186.69,200.38) -- cycle ;
%Shape: Polygon Curved [id:ds694155073374638] 
\draw  [color={rgb, 255:red, 0; green, 20; blue, 157 }  ,draw opacity=1 ][fill={rgb, 255:red, 0; green, 118; blue, 255 }  ,fill opacity=.7 ] (312.06,213.26) .. controls (307.36,207.26) and (318.28,198.95) .. (318.42,205.46) .. controls (318.55,211.97) and (331.25,205.64) .. (326.65,212.44) .. controls (322.05,219.24) and (322.77,213.38) .. (319.48,214.08) .. controls (316.18,214.79) and (316.77,219.26) .. (312.06,213.26) -- cycle ;
%Shape: Polygon Curved [id:ds7637335981518791] 
\draw  [color={rgb, 255:red, 70; green, 0; blue, 0 }  ,draw opacity=1 ][fill={rgb, 255:red, 208; green, 0; blue, 30 }  ,fill opacity=.7 ] (163.67,53.94) .. controls (167.22,49.72) and (171.44,76.39) .. (176.78,81.5) .. controls (182.11,86.61) and (213.67,150.17) .. (209.67,151.28) .. controls (205.67,152.39) and (209.47,149.07) .. (207.22,144.83) .. controls (204.97,140.6) and (160.11,58.17) .. (163.67,53.94) -- cycle ;
%Shape: Polygon Curved [id:ds624898059805149] 
\draw  [color={rgb, 255:red, 0; green, 20; blue, 157 }  ,draw opacity=1 ][fill={rgb, 255:red, 0; green, 118; blue, 255 }  ,fill opacity=.7 ] (343.82,93.84) .. controls (345.38,87.81) and (376.3,50.9) .. (378.75,53.38) .. controls (381.2,55.85) and (348.68,89.07) .. (346.63,93.06) .. controls (344.57,97.06) and (303.5,155.13) .. (300.75,152.63) .. controls (298,150.13) and (342.26,99.87) .. (343.82,93.84) -- cycle ;
%Shape: Polygon Curved [id:ds40144341069100653] 
\draw  [color={rgb, 255:red, 70; green, 0; blue, 0 }  ,draw opacity=1 ][fill={rgb, 255:red, 208; green, 0; blue, 30 }  ,fill opacity=0.1 ] (163.67,53.94) .. controls (167.22,49.72) and (221.25,46.1) .. (226.58,51.21) .. controls (231.92,56.32) and (213.67,150.17) .. (209.67,151.28) .. controls (205.67,152.39) and (150.83,154.69) .. (148.58,150.46) .. controls (146.34,146.22) and (160.11,58.17) .. (163.67,53.94) -- cycle ;
%Curve Lines [id:da40951559719460895] 
\draw    (180.67,198.83) .. controls (160.74,191.81) and (166.1,163.58) .. (193.31,132.65) ;
\draw [shift={(195,130.75)}, rotate = 132.15] [fill={rgb, 255:red, 0; green, 0; blue, 0 }  ][line width=0.08]  [draw opacity=0] (10.72,-5.15) -- (0,0) -- (10.72,5.15) -- (7.12,0) -- cycle    ;
%Curve Lines [id:da4009030965275604] 
\draw    (319.67,198.33) .. controls (343.88,182.16) and (355.89,160.85) .. (331.92,131.52) ;
\draw [shift={(330,129.25)}, rotate = 49.01] [fill={rgb, 255:red, 0; green, 0; blue, 0 }  ][line width=0.08]  [draw opacity=0] (10.72,-5.15) -- (0,0) -- (10.72,5.15) -- (7.12,0) -- cycle    ;

% Text Node
\draw (203.17,201.8) node [anchor=west] [inner sep=0.75pt]  [color={rgb, 255:red, 70; green, 0; blue, 0 }  ,opacity=1 ]  {$\tilde{f}^{n'_{1}}( D)$};
% Text Node
\draw (333.63,209.38) node [anchor=west] [inner sep=0.75pt]  [color={rgb, 255:red, 0; green, 20; blue, 157 }  ,opacity=1 ]  {$\tilde{f}^{n_{1}}( D)$};
% Text Node
\draw (167.62,186.3) node [anchor=east] [inner sep=0.75pt]  [color={rgb, 255:red, 0; green, 0; blue, 0 }  ,opacity=1 ]  {$\tilde{f}^{n'_{2} -n'_{1}}$};
% Text Node
\draw (345.92,154.8) node [anchor=west] [inner sep=0.75pt]  [color={rgb, 255:red, 0; green, 0; blue, 0 }  ,opacity=1 ]  {$\tilde{f}^{n_{2} -n_{1}}$};
% Text Node
\draw (154.87,73.3) node [anchor=east] [inner sep=0.75pt]  [color={rgb, 255:red, 70; green, 0; blue, 0 }  ,opacity=1 ]  {$\tilde{f}^{n'_{2}}( D)$};
% Text Node
\draw (230,55) node [anchor=north west][inner sep=0.75pt]  [color={rgb, 255:red, 0; green, 20; blue, 157 }  ,opacity=1 ]  {$\tilde{f}^{n_{2}}( D)+w$};
\draw (359.13,80.98) node [anchor=north west][inner sep=0.75pt]  [color={rgb, 255:red, 0; green, 20; blue, 157 }  ,opacity=1 ]  {$\tilde{f}^{n_{2}}( D)$};

\end{tikzpicture}
\caption{Proof of Proposition~\ref{PropDichot}. If $\tilde f^{n_2}(D)$ and $\tilde f^{n'_2}(D)$ have more or less the same size but different main directions, then it forces $\tilde f^{n'_2}(D)$ to have interior (light red shape): the shape in deep red is impossible for $\tilde f^{n'_2}(D)$ (recall that $n_2-n_1 = n'_2-n'_1$) because of the shape of the integer translate $\tilde{f}^{n_{2}}( D)+w$ of $\tilde{f}^{n_{2}}( D)$. This forces the relation $d'_{n_2}\gg d_{n_2}$, but a symmetric argument implies that $d_{n_2}\gg d'_{n_2}$, leading to a contradiction.}\label{FigPropDichot}
\end{center}
\end{figure}

The contradiction then comes as follows (see Figure~\ref{FigPropDichot}).
Take $x_1\in D$ such that $\|\tilde f^{n_2}(x_1)-\tilde f^{n_2}(x_0)\|\ge d_{n_2}/2$.  Note that $\|\tilde f^{n_1}(x_1)-\tilde f^{n_1}(x_0)\|\le 2R$. 
 There exists $v_0,v_1\in\Z^2$ such that $\tilde f^{n_1}(x_0)-v_0$ and $\tilde f^{n_1}(x_1)-v_1$ both belong to the fundamental domain $\tilde f^{n'_1}(D)$. Hence,
\begin{align*}
\|v_0-v_1\|
& \le \big\|\big(v_0-\tilde f^{n_1}(x_0)\big) - \big(v_1-\tilde f^{n_1}(x_1)\big)\big\|+\|\tilde f^{n_1}(x_0)-\tilde f^{n_1}(x_1)\| \\
& \le d_{n'_1}+d_{n_1} \le 4R.
\end{align*}
Now, take any $\theta$ such that $\delta(\theta,\theta_2)\le \delta_0$, and consider a unit vector $u_\theta$ orthogonal to $L_\theta$.
\begin{align*}
\operatorname{dist} :&= d\Big(\tilde f^{n_2-n_1}\big(\tilde f^{n_1}(x_0)-v_0\big) -\tilde f^{n_2-n_1}\big(\tilde f^{n_1}(x_1)-v_1\big),L_{\theta}\Big)\\
& = \Big|\langle \tilde f^{n_2}(x_0)-\tilde f^{n_2}(x_1),u_\theta\rangle + \langle v_0-v_1,u_\theta\rangle\Big|\\
& \ge \big\|\tilde f^{n_2}(x_0)-\tilde f^{n_2}(x_1)\big\|\sin\Big(\angle \big(\tilde f^{n_2}(x_0)-\tilde f^{n_2}(x_1),\theta\big)\Big) - 4R \\
& \ge \frac{d_{n_2}}{2}\sin\alpha - \varep d_{n_2}\qquad \text{(this is here we use $d_{n_2}\ge 4R/\varep$)}\\
& \ge \frac{d_{n_2}}{2}6\varep - \varep d_{n_2}\\
& \ge 2\varep d_{n_2}.
\end{align*} 
As the two points $\tilde f^{n_1}(x_0)-v_0$ and $\tilde f^{n_1}(x_1)-v_1$ belong to $\tilde f^{n'_1}(D)$, and as $(P_{{n'_2},\theta,\varep})$ holds for some $\theta$ such that $\delta(\theta,\theta_2)\le \delta_0$, then $\operatorname{dist}$ has to be smaller than $\varep d_{n'_2}$ for some $\theta$ such that $\delta(\theta,\theta_2)\le \delta_0$.
Hence, $d_{n'_2}\ge 2 d_{n_2}$. But then one can apply the exact same argument, permuting $\theta_1$ with $\theta_2$, to deduce that $d_{n_2}\ge 2 d_{n'_2}$.
This is a contradiction. 
\end{proof}

\section{Rational case and end of the proof of Theorem \ref{ThmElliptictorus}}\label{SecFinal}

We now state the last result we need to prove Theorem \ref{ThmElliptictorus}.

\begin{proposition}\label{LastProp}
Let $f \in \mathrm{Homeo}(\T ^2)$.
Suppose that any good limit value of the sequence $(A_n)$ is contained in the horizontal axis. Then either $f$ has bounded deviation in the vertical direction, or there exists a sequence $(a_n)$ of positive real numbers tending to infinity, and a sequence $(w_n)$ of vectors of $\R^2$, such that some limit value of the sequence 
\[(B_n+w_n)_n = \left(\frac{1}{a_n}\Big(\tilde f^n(D)-\tilde f^n(x_0)\Big) + w_n\right)_n\]
contains $B(0,1)$.
\end{proposition}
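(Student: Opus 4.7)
The plan is to assume that $f$ has unbounded vertical deviation and construct sequences $(a_n)$ and $(w_n)$ realizing the second conclusion.

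First, applying Lemma~\ref{LemDiamDeviation} with $v = (0,1)$, from the unbounded vertical deviation we extract a subsequence $(n_k)$ along which the vertical extent $v_{n_k}$ of $\tilde f^{n_k}(D)$ tends to infinity. The hypothesis that every good limit value of $(A_n)$ lies on the horizontal axis then yields $v_{n_k}/d_{n_k} \to 0$: along this subsequence, the fundamental domains grow much more horizontally than vertically.

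Next, I would set $a_{n_k} := v_{n_k}/3$ (completing $(a_n)$ for other indices so that $a_n \to +\infty$, e.g., $a_n := \max(v_n/3, \log n)$), so that the rescaled set $B_{n_k}$ has vertical extent $3$ and horizontal extent $3 d_{n_k}/v_{n_k} \to +\infty$. Choosing $w_{n_k}$ to center $B_{n_k}$ both vertically and horizontally at the origin, the set $B_{n_k} + w_{n_k}$ lies in a box $[-L_k, L_k] \times [-3/2, 3/2]$ with $L_k = 3d_{n_k}/(2v_{n_k}) \to +\infty$. By compactness of the Alexandroff--Hausdorff topology, a further subsequence of $(B_{n_k} + w_{n_k})$ converges to a closed set $C \subset \R^2$, which is convex by Lemma~\ref{LemConvexlimit}.

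The concluding step, which is the main obstacle, is to show that $B(0,1) \subset C$. Since $C$ is convex, unbounded in both horizontal directions, and has vertical extent at most $3$, a standard argument shows it is necessarily a horizontal strip $\R \times [y_{\min}, y_{\max}]$: its lower envelope $x \mapsto \inf\{y : (x,y) \in C\}$ is convex and bounded above (by $3/2$), hence constant, and similarly the upper envelope is concave and bounded below, hence constant. To deduce $B(0,1) \subset C$ we need $y_{\max} - y_{\min} \geq 2$, i.e., the extreme vertical points of $B_{n_k} + w_{n_k}$ (at heights $\pm 3/2$) must contribute to the finite part of $C$ rather than escape to horizontal infinity. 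To secure this, I would exploit the $\Z^2$-tiling structure of the fundamental domain $\tilde f^{n_k}(D)$: further adjusting $w_{n_k}$ by small rescaled integer translates (of size $1/a_{n_k} = 3/v_{n_k} \to 0$) lets us realize arbitrarily many horizontal positions of the fundamental domain modulo the rescaled lattice, and a further subsequence can then be chosen along which the extreme vertical points of $B_{n_k} + w_{n_k}$ stay at bounded horizontal coordinates. Then $C$ contains points at both extreme heights $\pm 3/2$, and the strip structure forces $C = \R \times [-3/2, 3/2] \supset B(0,1)$.
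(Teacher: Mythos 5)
Your argument correctly handles the ``thick'' case, but the step you yourself flag as the main obstacle is precisely where the real difficulty of the proposition lives, and the fix you propose cannot work. Consider the scenario in which $\tilde f^n(D)-\tilde f^n(x_0)$ stays within a uniformly bounded distance $C$ of a line $L_n$ whose slope $\theta_n$ tends to $0$ while $d_n\sin\theta_n\to\infty$. This is consistent with all good limit values being horizontal and with unbounded vertical deviation (a priori), and in it your rescaled set $B_{n_k}+w_{n_k}$ is a neighborhood of width $O(C/a_{n_k})\to 0$ of a segment of slope $\to 0$ running from roughly $(-L_k,-3/2)$ to $(L_k,3/2)$ with $L_k\to\infty$. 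The two extreme-height points are then at horizontal distance $2L_k\to\infty$ from \emph{each other}, so no translation (integer, rescaled-integer, or otherwise) can keep both at bounded horizontal coordinates; whatever $w_{n_k}$ you choose, every Alexandroff--Hausdorff limit is contained in a single line, and the strip you obtain is degenerate, $y_{\max}-y_{\min}=0$. In other words, in this scenario the second conclusion of the proposition is simply false for \emph{every} choice of $(a_n)$ and $(w_n)$, so the proposition cannot be proved by a cleverer choice of centering: one must instead show that this ``thin tilted tube'' scenario is incompatible with the hypotheses.

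That incompatibility is the bulk of the paper's proof and uses the dynamics in an essential way. The paper first proves a dichotomy (Claim~\ref{LastClaim}): either some limit value of a suitably rescaled and translated sequence already contains $B(0,1)$ --- this is obtained by rescaling by $\sqrt{b_{n}}$, where $b_n$ is the maximal distance of $\tilde f^n(D)$ to its diameter chord, so that both the chord length and the transverse thickness blow up, and then invoking convexity (Lemma~\ref{LemConvexlimit}) --- or else the tube structure $\tilde f^n(D)-\tilde f^n(x_0)\subset B(L_n,C)$ holds with $\theta_n\to 0$. In the second case the contradiction comes from comparing two times $n_1<n_2$: integer vertical translates of $\tilde f^{n_1}(D_0)$ are forced, via a Jordan-curve intersection lemma, to meet $\tilde f^{n_2}(D)$, which after applying $\tilde f^{-n_1}$ produces two points of $\tilde f^{n_2-n_1}(D)$ with horizontal separation at most $1$ and vertical separation at least $3C$, contradicting the tube structure at time $n_2-n_1$. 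Your proposal contains no substitute for this argument, so it has a genuine gap exactly at the step you identified.
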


Let us first show how this proposition implies Theorem \ref{ThmElliptictorus}.

\begin{proof}[Proof of Theorem \ref{ThmElliptictorus}]
Let $f \in \mathrm{Homeo}_0(\T ^2)$, and suppose that in any rational direction, $f$ has no bounded deviation. We want to prove that $f$ does not act elliptically.

Apply Proposition \ref{PropDichot}. In the first case given by this proposition, Proposition~\ref{PropParabolicityCriterion} implies that $f$ does not act elliptically on $C^{\dagger}(\T ^2)$.

Suppose now that the second case given by Proposition~\ref{PropDichot} holds: There exists a line with rational slope which contains any good limit value of the sequence $(A_n)_{n}$. By conjugating with an element of $SL_2(\Z)$ if necessary, we do not lose generality by supposing that this direction is horizontal. 

Apply Proposition~\ref{LastProp}. As $f$ has unbounded deviation in the vertical direction, there exists a sequence $(a_k)$ of positive real numbers tending to infinity, and a sequence $(w_k)$ of vectors of $\R^2$, such that some limit value of the sequence $(B_n+w_k)_k$ contains $B(0,1)$.

The parabolicity criterion (Proposition~\ref{PropParabolicityCriterion}) applies and shows that $f$ does not act elliptically on $C^{\dagger}(\T ^2)$.
\bigskip

Now, suppose that $f \in \mathrm{Homeo}(\T ^2)$ has an iterate homotopic to a linear Anosov homeomorphism $A$. Then $f$ has unbounded deviation from any direction, and by \cite[Theorem 5.3]{BHMMW}, $f$ acts hyperbolically on $C^\dagger(\T^2)$. Note that we could also use our ellipticity criterion (Proposition~\ref{PropParabolicityCriterion}) to conclude that $f$ does not act elliptically: consider $(0,0), (1,1)\in D$, then $\tilde f^n (1,1)-\tilde f^n(0,0) = A^n (1,1)$; using the fact that these vectors tend to some irrational direction of $\Pj(\R^2)$ together with the quasi-convexity of fundamental domains we conclude that some limit set $B_\infty$ contains a nontrivial segment of irrational slope.
\bigskip

Finally, suppose that $f \in \mathrm{Homeo}(\T^2)$ has an iterate homotopic to a Dehn twist. Conjugating by an element of $SL_2(\mathbb{Z})$ if necessary, we can suppose that $f^n$ is homotopic to a Dehn twist  about the horizontal direction. Suppose that $f$ has unbounded displacement in the vertical direction. Then there is some $k\in\Z^*$ such that $\tilde f^n (0,1)-\tilde f^n(0,0) = (nk,1)$. Hence any good limit value contains a nontrivial horizontal interval. If there is a good limit value which is not included in the horizontal axis, as such a limit value is convex and contains a nontrivial horizontal interval, then Proposition~\ref{PropParabolicityCriterion} applies and $f$ does not act elliptically on $C^\dagger(\T^2)$. If not, then any good limit value is included in the horizontal axis, and Proposition~\ref{LastProp} allows to once again apply Proposition~\ref{PropParabolicityCriterion} to prove that $f$ does not act elliptically on $C^\dagger(\T^2)$.
\end{proof}

\begin{proof}[Proof of Proposition~\ref{LastProp}]
In this proof, for a set $A$ and $r\ge 0$, we denote $B(A,r) = \{x\mid d(x,A)\le r\}$.

Suppose that any good limit value of the sequence $(A_n)_n$ is included in the horizontal axis, and that $f$ has unbounded displacement in the vertical direction.

Applying the same idea as in the proof of Theorem \ref{ThmElliptictorus} (3 paragraphs above), by considering the iterates of $(0,0)$ and $(1,1)$, we can see that under these conditions, an iterate of $f$ cannot be isotopic to a linear Anosov automorphism, or a Dehn twist in a direction that is not horizontal (adapting the points $(0,0)$ and $(1,1)$ in the latter case if necessary). So some iterate of $f$ is isotopic to the identity, or to a Dehn twist. Hence, replacing $f$ with an iterate of it if necessary, we can suppose that $f$ is homotopic to
\[\begin{pmatrix}
1 & k_0 \\ 0 & 1
\end{pmatrix}\]
for some $k_0\in\Z$.

\begin{claim}\label{LastClaim}
Suppose that any good limit value of the sequence $(A_n)_n$ is contained in the horizontal axis. 
Suppose also that for any sequence $(a_n)$ of positive real numbers tending to infinity, and any sequence $(w_n)$ of vectors of $\R^2$, any limit value of the sequence $(B_n+w_n)_n$ does not contain $B(0,1)$.

Then there exists $C>0$ and, for any $n\ge 0$, a line $L_n$ passing through $0$ and of direction $\theta_n$ such that:
\begin{itemize}
\item $\theta_n$ tends to the horizontal direction $(1,0)$;
\item $\tilde f^n(D)-\tilde f^n(x_0)\subset B(L_n, C)$.
\end{itemize}
If moreover $f$ has unbounded deviation in the vertical direction then, up to taking a subsequence, we can moreover suppose the following:
\begin{itemize}
\item the projection of $\tilde f^n(D)$ on the vertical axis has length $h_n$ tending to infinity.
\end{itemize}
\end{claim}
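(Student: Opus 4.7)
The plan is to prove the claim in three steps: first establish a uniform bound on the minimal width of $\tilde f^n(D) - \tilde f^n(x_0)$ using hypothesis~2 and Pál's inequality, then show the minimizing direction tends to horizontal whenever $d_n \to +\infty$ using hypothesis~1, and finally apply Lemma~\ref{LemDiamDeviation} to handle the ``moreover'' part.

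For each $n$, let $T_n(\theta)$ be the length of the projection of $\tilde f^n(D) - \tilde f^n(x_0)$ onto the line orthogonal to $\theta$, let $\theta_n^* \in \Pj(\R^2)$ achieve $T_n^* := \min_\theta T_n(\theta)$, and let $L_n^*$ be the line through $0$ of direction $\theta_n^*$. Since $0$ belongs to the set, it lies in $B(L_n^*, T_n^*)$. I claim that $T_n^* \le C_0$ for some $C_0 > 0$: otherwise, extract $n_k$ with $T_{n_k}^* \to +\infty$ and set $a_k = T_{n_k}^*/4$. The convex hull of $B_k = a_k^{-1}(\tilde f^{n_k}(D) - \tilde f^{n_k}(x_0))$ has minimal width $\ge 4$ in every direction, hence by Pál's inequality (any planar convex body of width $w$ contains an inscribed disk of radius $\ge w/3$) contains a disk $B(c_k, 4/3)$. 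Iterating the $\sqrt{2}$-quasi-convexity of $\tilde f^{n_k}(D)$ (Lemma~3.3 of Misiurewicz--Ziemian) via Carathéodory's theorem in $\R^2$ shows that the convex hull of $B_k$ lies within $O(1/a_k) \to 0$ of $B_k$. Setting $w_k = -c_k$, any convergent subsequence of $(B_k + w_k)$ tends to a closed set containing $B(0, 4/3) \supset B(0,1)$, contradicting hypothesis~2.

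Next I show that for every $\epsilon > 0$ there exists $D(\epsilon) > 0$ such that $d_n \ge D(\epsilon)$ forces $|\sin \alpha_n^*| < \epsilon$, where $\alpha_n^*$ denotes the angle between $\theta_n^*$ and $(1,0)$. Otherwise, extract $n_k$ with $d_{n_k} \to +\infty$ and $|\sin\alpha_{n_k}^*| \ge \epsilon$. The set $A_{n_k}$ has diameter $1$, contains $0$, and lies in a strip of width $T_{n_k}^*/d_{n_k} \le C_0/d_{n_k} \to 0$ around $L_{\theta_{n_k}^*}$. Extracting so that $\theta_{n_k}^* \to \theta_\infty^*$ and $A_{n_k} \to A_\infty$, we obtain $A_\infty \subset L_{\theta_\infty^*}$. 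But by hypothesis~1, $A_\infty$ is a good limit value contained in the horizontal axis, and of diameter $1$, forcing $L_{\theta_\infty^*}$ to coincide with the horizontal axis and $\theta_\infty^* = (1,0)$, contradicting $|\sin\alpha^*_{n_k}| \ge \epsilon$.

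To finish, set $\theta_n = \theta_n^*$ whenever $|\sin\alpha_n^*| \le 1/n$ and $\theta_n = (1,0)$ otherwise, with $L_n$ the line through $0$ of direction $\theta_n$. In the first case $T_n(\theta_n) = T_n^* \le C_0$ and $\theta_n$ is within $1/n$ of horizontal; in the second case, the preceding step yields $d_n < D(1/n)$ and hence $T_n(\theta_n) \le d_n$. For the moreover part: if $f$ has unbounded vertical deviation, Lemma~\ref{LemDiamDeviation} applied to $v = (0,1)$ produces a subsequence along which the vertical extent $h_n$ of $\tilde f^n(D)-\tilde f^n(x_0)$ tends to $+\infty$, so also $d_n \to +\infty$; along this subsequence only the first case occurs for $n$ large, giving a uniform bound $C = C_0$, the convergence $\theta_n \to (1,0)$, and $h_n \to +\infty$. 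The main obstacle will be Step~1, where one must verify carefully that the disk inside the convex hull given by Pál's inequality is genuinely approached by points of $B_k + w_k$ in the closed-subset topology from Definition~\ref{DefGood}, so that it lies in the limiting set $B_\infty$; this reduces to the uniform-in-$k$ control on convex-hull approximation provided by the Misiurewicz--Ziemian quasi-convexity after rescaling.
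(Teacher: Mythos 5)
Your proposal is correct in substance and has the same architecture as the paper's proof: derive a contradiction with hypothesis~2 by rescaling a subsequence so that some limit value acquires a unit disk, use hypothesis~1 to control the direction when $d_n\to\infty$, and invoke Lemma~\ref{LemDiamDeviation} for the ``moreover'' part. The implementation of the key step differs. The paper takes for $L'_n$ the line through two diameter-realizing points of $\tilde f^n(D)$, sets $b_n=\max_{z\in D}d(\tilde f^n(z),L'_n)$, and, when $b_{n_k}\to\infty$, rescales by $a_k=\sqrt{b_{n_k}}$ so that the rescaled set contains three explicit points spanning a triangle all of whose sides tend to infinity; convexity of the limit (Lemma~\ref{LemConvexlimit}) then produces a large disk. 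You instead minimize the width over all directions and, when that minimal width blows up, rescale by a quarter of it, apply the inradius--width inequality to the convex hull, and transfer the inscribed disk back to $B_k+w_k$ itself via two applications of Misiurewicz--Ziemian quasi-convexity through Carath\'eodory. Both routes work; yours imports an external convexity fact but avoids choosing witness points, and your explicit $O(1/a_k)$ approximation of the convex hull is precisely the verification needed to place the disk in the limit set, a point the paper handles through Lemma~\ref{LemConvexlimit}. Your second step (compactness plus hypothesis~1 to show the minimizing direction tends to horizontal whenever $d_n\to\infty$) is correct and in fact supplies an argument the paper asserts without proof.

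The one defect is in your final assembly. In your second case ($|\sin\alpha_n^*|>1/n$) you only get $T_n(\theta_n)\le d_n<D(1/n)$, and $D(1/n)$ need not be bounded in $n$, so the uniform constant $C$ of the second bullet is not established for those $n$; similarly, along the subsequence of the ``moreover'' part, $d_n\to\infty$ does not force $|\sin\alpha_n^*|\le 1/n$ eventually, so ``only the first case occurs for $n$ large'' is unjustified (harmlessly so: on that subsequence just take $\theta_n=\theta_n^*$, which gives width $\le C_0$ and angle tending to $0$). Be aware that this tension between a single $C$ for all $n$ and $\theta_n\to(1,0)$ over all $n$ is already present in the statement of Claim~\ref{LastClaim} itself: the paper's proof secures the uniform bound but is silent on the first bullet when $d_n$ stays bounded. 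Since the claim is only used downstream at times where the diameter is large, where your argument is complete, this is an imprecision inherited from the statement rather than a fatal gap, but you should say explicitly that the two bullets are obtained jointly only along subsequences with $d_n\to\infty$.
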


\begin{proof}
For any $n\in\N$, let $x_n,y_n\in D$ such that $d(\tilde f^n(x_n),\tilde f^n(y_n)) = d_n$. Let $L'_n$ be the line passing by $\tilde f^n(x_n)$ and $\tilde f^n(y_n)$.

Let $b_n = \max\{d(\tilde f^n(z),L'_n)\mid z\in D\}$ and $z_n\in D$ be such that $d(\tilde f^n(z_n),L'_n) = b_n$. If $\sup_n b_n = C/2 < +\infty$, the two first points of the claim are proved, by setting $L_n = L'_n-\tilde f^n(x_0)$: in this case the distance of any point of $\tilde f^n(D)-\tilde f^n(x_0)$ to $L_n$ is smaller than $2C/2 = C$. 

Otherwise, there exists a subsequence $(n_k)$ along which $b_{n_k}\ge k$. Let $q_k$ be the orthogonal projection of $\tilde f^n(z_{n_k})$ on $L'_{n_k}$, $a_k = \sqrt{b_{n_k}}$. Note that $q_k\in [x_{n_k},y_{n_k}]$ because of the definition of $x_n$ and $y_n$ (for $a,b\in\R^2$, we denote by $[a,b]$ the affine segment between points $a$ and $b$).
Let also
\[w_k = \frac{q_k+\tilde f^{n_k}(x_0)}{a_k}.\]
Then the set 
\[B_{n_k}+w_k = \frac{\tilde f^{n_k}(D) - \tilde f^{n_k}(x_0)}{a_k}+w_k = \frac{\tilde f^{n_k}(D) - q_k}{a_k}\]
contains both the two points $(\tilde f^n(x_{n_k})-q_k)/a_k$ and $(\tilde f^n(y_{n_k})-q_k)/a_k$, which belong to the line $L_{n_k} = L'_{n_k}-q_k$ passing through 0, which are at distance $d_{n_k}/a_k \ge \sqrt k$, and such that the segment between these points contains 0; this set also contains the point $(\tilde f^n(z_{n_k})-q_k)/a_k$ that is at distance $\ge\sqrt k$ of the line $L_{n_k} = L'_{n_k}-q_k$.

By Lemma~\ref{LemConvexlimit}, we deduce that any limit value of the sequence $B_{n_k}+w_k$ contains a quarter of disk (centered at 0) of radius $10$. By modifying a bit the sequence $(w_k)$ to $(w'_k)$, \emph{i.e.} by applying a translation to each set $B_{n_k}+w_k$, we get a limit value of the sequence $(B_n+w'_n)_n$ containing $B(0,1)$.
\bigskip

For the last point of the claim, define
\[e_n = \sup\Big\{\big|\big\langle\tilde f^n(x)-\tilde f^n(y),\,(0,1)\big\rangle\big|\ \mid x,y\in D^2\Big\}\]
the ``diameter of $\tilde f^n(D)$ in the vertical direction''. By Lemma~\ref{LemDiamDeviation}, using that $f$ has unbounded deviation in the vertical direction, we have a subsequence $(n_k)$ along which $\lim e_{n_k} = +\infty$. This proves the last point.
\end{proof}

Now, up to increasing the constant $C$ of Claim~\ref{LastClaim} if necessary, suppose $C\ge 2$.
Let $m=5\lceil C\rceil$ and $n_1\in\N$ such that $h_{n_1}\ge 20m$ and $|\theta_{n_1}|\le 1/100$ (by Claim~\ref{LastClaim}). We denote by $p_1$ the projection on the first (horizontal) coordinate, and $p_2$ the projection on the second (vertical) one.

Let $n_2\ge n_1$ such that $|\theta_{n_2}| \ll |\theta_{n_1}|$, $|\tan \theta_{n_2}|d_{n_1}\le C$ and $d_{n_2}\gg d_{n_1}+k_0n_1C$. %We moreover suppose that these properties remain true when replacing $n_2$ by $n_2-n_1$ (see Claim~\ref{LastClaim}).

Let $x_2^-,x_2^+, x\in \tilde f^{n_2}(D)$ such that $p_1(x_2^-) = \min (p_1|_{\tilde{f}^{n_2}(D)})$, $p_1(x_2^+) = \max (p_1|_{\tilde{f}^{n_2}(D)})$, and $p_1(x) = \frac12(\min (p_1|_{\tilde{f}^{n_2}(D)})+\max (p_1|_{\tilde{f}^{n_2}(D)}))$ (see Figures~\ref{FigLastProp} and {Figxx1-}).

Let $v\in\Z^2$ such that $x\in \tilde f^{n_1}(D)+v$. We denote $D_0$ the integer translate of $D$ satisfying $\tilde f^{n_1}(D)+v = \tilde f^{n_1}(D_0)$. Let $x_1^-,x_1^+\in \tilde f^{n_1}(D_0)$ such that $d(x_1^-,x_1^+) = d_{n_1}$ and $p_1(x_1^-) < p_1(x_1^+)$. 

We suppose that
\begin{equation}\label{EqHypX}
p_{L_{n_1}}(x)\ge \frac12\big(p_{L_{n_1}}(x_1^-)+p_{L_{n_1}}(x_1^+)\big)
\end{equation}
(we identify $L_{n_1}$ with $\R$), the other inequality can be treated identically. Similarly, we suppose that $\theta_{n_1}>0$.

Let $n\in [m,2m]\cap \N$. Then
\[\tilde f^{n_1}\big(D_0+(0,n)\big) = \tilde f^{n_1}(D_0) + w_n,
\qquad\text{with}\quad
w_n = (k_0\,n_1\,n,n).\]

\begin{figure}
\begin{center}
\tikzset{every picture/.style={line width=0.75pt}} %set default line width to 0.75pt        

\begin{tikzpicture}[x=0.75pt,y=0.75pt,yscale=-1.3,xscale=1.3]
%uncomment if require: \path (0,300); %set diagram left start at 0, and has height of 300

%Straight Lines [id:da22949260468728994] 
\draw [color={rgb, 255:red, 155; green, 155; blue, 155 }  ,draw opacity=1 ,dash pattern={on 4.5pt off 4.5pt}]  (160,170) -- (560,170) ;
%Straight Lines [id:da31856547097060883] 
\draw [color={rgb, 255:red, 0; green, 95; blue, 204 }  ,draw opacity=1 ]   (170,180) -- (550,160) ;
%Straight Lines [id:da7973118685914075] 
\draw [color={rgb, 255:red, 208; green, 2; blue, 27 }  ,draw opacity=1 ][fill={rgb, 255:red, 208; green, 2; blue, 27 }  ,fill opacity=1 ]   (295,215) -- (385,145) ;
%Shape: Circle [id:dp3892228420490417] 
\draw  [draw opacity=0][fill={rgb, 255:red, 208; green, 2; blue, 27 }  ,fill opacity=1 ] (382.36,145) .. controls (382.36,143.54) and (383.54,142.36) .. (385,142.36) .. controls (386.46,142.36) and (387.64,143.54) .. (387.64,145) .. controls (387.64,146.46) and (386.46,147.64) .. (385,147.64) .. controls (383.54,147.64) and (382.36,146.46) .. (382.36,145) -- cycle ;
%Shape: Circle [id:dp357443786923448] 
\draw  [draw opacity=0][fill={rgb, 255:red, 208; green, 2; blue, 27 }  ,fill opacity=1 ] (292.36,215) .. controls (292.36,213.54) and (293.54,212.36) .. (295,212.36) .. controls (296.46,212.36) and (297.64,213.54) .. (297.64,215) .. controls (297.64,216.46) and (296.46,217.64) .. (295,217.64) .. controls (293.54,217.64) and (292.36,216.46) .. (292.36,215) -- cycle ;

%Shape: Circle [id:dp6493938580983326] 
\draw  [draw opacity=0][fill={rgb, 255:red, 0; green, 95; blue, 204 }  ,fill opacity=1 ] (167.36,180) .. controls (167.36,178.54) and (168.54,177.36) .. (170,177.36) .. controls (171.46,177.36) and (172.64,178.54) .. (172.64,180) .. controls (172.64,181.46) and (171.46,182.64) .. (170,182.64) .. controls (168.54,182.64) and (167.36,181.46) .. (167.36,180) -- cycle ;
%Shape: Circle [id:dp09240097843204886] 
\draw  [draw opacity=0][fill={rgb, 255:red, 0; green, 95; blue, 204 }  ,fill opacity=1 ] (547.36,160) .. controls (547.36,158.54) and (548.54,157.36) .. (550,157.36) .. controls (551.46,157.36) and (552.64,158.54) .. (552.64,160) .. controls (552.64,161.46) and (551.46,162.64) .. (550,162.64) .. controls (548.54,162.64) and (547.36,161.46) .. (547.36,160) -- cycle ;
%Shape: Circle [id:dp0848948147019164] 
\draw  [draw opacity=0][fill={rgb, 255:red, 0; green, 0; blue, 0 }  ,fill opacity=1 ] (349.79,170.29) .. controls (349.79,168.83) and (350.97,167.65) .. (352.43,167.65) .. controls (353.88,167.65) and (355.07,168.83) .. (355.07,170.29) .. controls (355.07,171.74) and (353.88,172.92) .. (352.43,172.92) .. controls (350.97,172.92) and (349.79,171.74) .. (349.79,170.29) -- cycle ;
%Straight Lines [id:da23330868847599895] 
\draw [color={rgb, 255:red, 208; green, 2; blue, 27 }  ,draw opacity=1 ][fill={rgb, 255:red, 208; green, 2; blue, 27 }  ,fill opacity=1 ]   (295.07,203.55) -- (385.07,133.55) ;
%Shape: Circle [id:dp09924145332485168] 
\draw  [draw opacity=0][fill={rgb, 255:red, 208; green, 2; blue, 27 }  ,fill opacity=1 ] (382.43,133.55) .. controls (382.43,132.09) and (383.62,130.91) .. (385.07,130.91) .. controls (386.53,130.91) and (387.71,132.09) .. (387.71,133.55) .. controls (387.71,135) and (386.53,136.18) .. (385.07,136.18) .. controls (383.62,136.18) and (382.43,135) .. (382.43,133.55) -- cycle ;
%Shape: Circle [id:dp4873967967936311] 
\draw  [draw opacity=0][fill={rgb, 255:red, 208; green, 2; blue, 27 }  ,fill opacity=1 ] (292.43,203.55) .. controls (292.43,202.09) and (293.62,200.91) .. (295.07,200.91) .. controls (296.53,200.91) and (297.71,202.09) .. (297.71,203.55) .. controls (297.71,205) and (296.53,206.18) .. (295.07,206.18) .. controls (293.62,206.18) and (292.43,205) .. (292.43,203.55) -- cycle ;

%Straight Lines [id:da4034216125124328] 
\draw [color={rgb, 255:red, 208; green, 2; blue, 27 }  ,draw opacity=1 ][fill={rgb, 255:red, 208; green, 2; blue, 27 }  ,fill opacity=1 ]   (295.09,192.77) -- (385.09,122.77) ;
%Shape: Circle [id:dp061874552063946875] 
\draw  [draw opacity=0][fill={rgb, 255:red, 208; green, 2; blue, 27 }  ,fill opacity=1 ] (382.45,122.77) .. controls (382.45,121.32) and (383.63,120.13) .. (385.09,120.13) .. controls (386.54,120.13) and (387.72,121.32) .. (387.72,122.77) .. controls (387.72,124.23) and (386.54,125.41) .. (385.09,125.41) .. controls (383.63,125.41) and (382.45,124.23) .. (382.45,122.77) -- cycle ;
%Shape: Circle [id:dp40005623748606955] 
\draw  [draw opacity=0][fill={rgb, 255:red, 208; green, 2; blue, 27 }  ,fill opacity=1 ] (292.45,192.77) .. controls (292.45,191.32) and (293.63,190.13) .. (295.09,190.13) .. controls (296.54,190.13) and (297.72,191.32) .. (297.72,192.77) .. controls (297.72,194.23) and (296.54,195.41) .. (295.09,195.41) .. controls (293.63,195.41) and (292.45,194.23) .. (292.45,192.77) -- cycle ;

%Straight Lines [id:da2809310119969639] 
\draw [color={rgb, 255:red, 208; green, 2; blue, 27 }  ,draw opacity=1 ][fill={rgb, 255:red, 208; green, 2; blue, 27 }  ,fill opacity=1 ]   (294.87,181.6) -- (384.87,111.6) ;
%Shape: Circle [id:dp8753911959539996] 
\draw  [draw opacity=0][fill={rgb, 255:red, 208; green, 2; blue, 27 }  ,fill opacity=1 ] (382.23,111.6) .. controls (382.23,110.14) and (383.41,108.96) .. (384.87,108.96) .. controls (386.32,108.96) and (387.5,110.14) .. (387.5,111.6) .. controls (387.5,113.05) and (386.32,114.23) .. (384.87,114.23) .. controls (383.41,114.23) and (382.23,113.05) .. (382.23,111.6) -- cycle ;
%Shape: Circle [id:dp7283423452777368] 
\draw  [draw opacity=0][fill={rgb, 255:red, 208; green, 2; blue, 27 }  ,fill opacity=1 ] (292.23,181.6) .. controls (292.23,180.14) and (293.41,178.96) .. (294.87,178.96) .. controls (296.32,178.96) and (297.5,180.14) .. (297.5,181.6) .. controls (297.5,183.05) and (296.32,184.23) .. (294.87,184.23) .. controls (293.41,184.23) and (292.23,183.05) .. (292.23,181.6) -- cycle ;

%Curve Lines [id:da6176062323536122] 
\draw  [dash pattern={on 4.5pt off 4.5pt}]  (325.14,153.67) .. controls (318.08,109.97) and (303.1,77.3) .. (247.54,72.51) ;
\draw [shift={(244.97,72.31)}, rotate = 3.97] [fill={rgb, 255:red, 0; green, 0; blue, 0 }  ][line width=0.08]  [draw opacity=0] (8.93,-4.29) -- (0,0) -- (8.93,4.29) -- (5.93,0) -- cycle    ;
%Shape: Square [id:dp9382132959785603] 
\draw  [color={rgb, 255:red, 110; green, 0; blue, 15 }  ,draw opacity=1 ][fill={rgb, 255:red, 208; green, 2; blue, 27 }  ,fill opacity=1 ] (224.25,71.31) -- (232.61,71.31) -- (232.61,79.67) -- (224.25,79.67) -- cycle ;
%Shape: Square [id:dp27176359978826514] 
\draw  [color={rgb, 255:red, 110; green, 0; blue, 15 }  ,draw opacity=1 ][fill={rgb, 255:red, 208; green, 2; blue, 27 }  ,fill opacity=1 ] (224.25,62.94) -- (232.61,62.94) -- (232.61,71.31) -- (224.25,71.31) -- cycle ;
%Shape: Square [id:dp6332784514897234] 
\draw  [color={rgb, 255:red, 110; green, 0; blue, 15 }  ,draw opacity=1 ][fill={rgb, 255:red, 208; green, 2; blue, 27 }  ,fill opacity=1 ] (224.25,54.58) -- (232.61,54.58) -- (232.61,62.94) -- (224.25,62.94) -- cycle ;
%Shape: Square [id:dp09309454877456846] 
\draw  [color={rgb, 255:red, 110; green, 0; blue, 15 }  ,draw opacity=1 ][fill={rgb, 255:red, 208; green, 2; blue, 27 }  ,fill opacity=1 ] (224.25,46.22) -- (232.61,46.22) -- (232.61,54.58) -- (224.25,54.58) -- cycle ;
%Curve Lines [id:da3537015331447497] 
\draw [color={rgb, 255:red, 0; green, 95; blue, 204 }  ,draw opacity=1 ]   (160.63,60.8) .. controls (204.63,46.8) and (198.74,77.87) .. (229.29,74.32) .. controls (259.84,70.78) and (194.38,53.78) .. (228.43,50.4) .. controls (262.48,47.03) and (258.08,75.87) .. (307.41,61.87) ;
%Curve Lines [id:da5353587935385027] 
\draw [color={rgb, 255:red, 0; green, 95; blue, 204 }  ,draw opacity=1 ] [dash pattern={on 0.84pt off 2.51pt}]  (144.19,64.58) .. controls (152.37,63.4) and (151.39,63.65) .. (160.63,60.8) ;
%Curve Lines [id:da37243650570619113] 
\draw [color={rgb, 255:red, 0; green, 95; blue, 204 }  ,draw opacity=1 ] [dash pattern={on 0.84pt off 2.51pt}]  (307.41,61.87) .. controls (315.33,59.49) and (316.67,55.57) .. (327.42,54.4) ;
% Text Node
\draw (351.79,173.69) node [anchor=north west][inner sep=0.75pt]    {$x$};
% Text Node
\draw (293,215) node [anchor=east] [inner sep=0.75pt]  [color={rgb, 255:red, 208; green, 2; blue, 27 }  ,opacity=1 ]  {$x_{1}^{-}$};
% Text Node
\draw (387,145) node [anchor=west] [inner sep=0.75pt]  [color={rgb, 255:red, 208; green, 2; blue, 27 }  ,opacity=1 ]  {$x_{1}^{+}$};
% Text Node
\draw (168,183.4) node [anchor=north east] [inner sep=0.75pt]  [color={rgb, 255:red, 0; green, 95; blue, 204 }  ,opacity=1 ]  {$x_{2}^{-}$};
% Text Node
\draw (552,156.6) node [anchor=south west] [inner sep=0.75pt]  [color={rgb, 255:red, 0; green, 95; blue, 204 }  ,opacity=1 ]  {$x_{2}^{+}$};
% Text Node
\draw (387.09,119.37) node [anchor=south west] [inner sep=0.75pt]  [font=\footnotesize,color={rgb, 255:red, 208; green, 2; blue, 27 }  ,opacity=1 ]  {$ \begin{array}{l}
\text{translates}\\
\text{of}\ \tilde{f}^{n_{1}}( D)
\end{array}$};
% Text Node
\draw (303.81,91.75) node [anchor=south west] [inner sep=0.75pt]    {$\tilde{f}^{-n_{1}}$};
% Text Node
\draw (233.85,180.52) node [anchor=north] [inner sep=0.75pt]  [font=\footnotesize,color={rgb, 255:red, 0; green, 95; blue, 204 }  ,opacity=1 ]  {$\ \tilde{f}^{n_{2}}( D)$};
% Text Node
\draw (170.52,67.19) node [anchor=north] [inner sep=0.75pt]  [font=\footnotesize,color={rgb, 255:red, 0; green, 95; blue, 204 }  ,opacity=1 ]  {$\ \tilde{f}^{n_{2} -n_{1}}( D)$};
% Text Node
\draw (209.75,117.37) node [anchor=south west] [inner sep=0.75pt]  [font=\footnotesize,color={rgb, 255:red, 208; green, 2; blue, 27 }  ,opacity=1 ]  {$ \begin{array}{l}
\text{translates}\\
\text{of}\ D
\end{array}$};
\end{tikzpicture}
\caption{End of proof of Proposition~\ref{LastProp}\label{FigLastProp} in the case of $k_0=0$. In the case $k_0\neq 0$ there is a shear appearing in the translates of $D$.}
\end{center}
\end{figure}
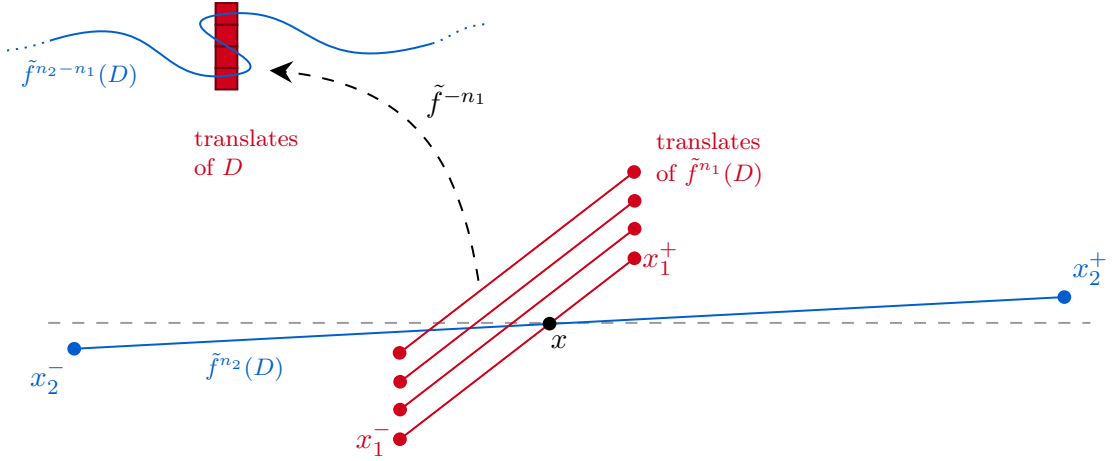

Let $\gamma_1$ be a path included in $\tilde f^{n_1}(D_0)$ linking $x_1^-$ to $x_1^+$, and $\gamma_2$ a path included in $\tilde f^{n_2}(D)$ linking $x_2^-$ to $x_2^+$. We want to prove that the paths $\gamma_1+w_n$ and $\gamma_2$ intersect for any $n\in [m,2m]$. We first define an affine shear mapping $A$ with linear part of the form 
\[\begin{pmatrix}
1 & 0 \\ -\tan\theta'_{n_2} & 1
\end{pmatrix}\]
for some angle $\theta'_{n_2}$, 
such that the abscissa of $Ax$ is 0, and that the points $Ax_2^-,Ax_2^+$ are on the horizontal axis $\{y=0\}$. Note that it forces the angle $\theta'_{n_2}$ to be close to $\theta_{n_2}$, in the sense that $\theta'_{n_2}-\theta_{n_2}\ll \theta_{n_2}$ (because of the bound by $C$, and the fact that $h_{n_2}$ goes to infinity).

Let us write $Ax_1^- = (a_1^-,b_1^-)$, $Ax_1^+ = (a_1^+,b_1^+)$, $Ax_2^- = (-M,0)$ and $Ax_2^+ = (M,0)$.
The fact that $d_{n_2}\gg d_{n_1}$ implies that 
\begin{equation}\label{EqA1-}
\max(|a_1^-|, |a_1^+|) \leq \max \big(d(x,x_1^-),d(x,x_1^+)\big)  \le d_{n_1}\le M/2.
\end{equation}
Hence, because $k_0n_1m \le 6k_0n_1C\ll d_{n_2}$,
\begin{equation}\label{EqA1-2}
\max\big(|a_1^-+k_0 n_1 n |, |a_1^++k_0 n_1 n |\big) \le M;
\end{equation}
note that $a_1^-+k_0 n_1 n$ is the abscissa of $A(x_1^-+w_n)$ and $a_1^++k_0 n_1 n$ is the abscissa of $A(x_1^++w_n)$.
The same estimates hold for any point of $\gamma_1+w_n$.

We also have (see Figure~\ref{Figxx1-} for the notations and the configuration.)
\begin{align*}
p_2(x_1^- -x) & \le p_2(P-Q)+2C \\
& = -d(P,Q)\sin\theta_{n_1}+2C\\
& \le -\frac{d_{n_1}-2C}{2}\sin\theta_{n_1}+2C.
\end{align*}
The last inequality comes from the fact that, by the hypothesis \eqref{EqHypX} on $x$, we have that $d(P,Q)\ge d(P,R)/2$. This implies that

\begin{figure}
\begin{center}
\tikzset{every picture/.style={line width=0.75pt}} %set default line width to 0.75pt        

\begin{tikzpicture}[x=0.75pt,y=0.75pt,yscale=-1,xscale=1]
%uncomment if require: \path (0,300); %set diagram left start at 0, and has height of 300

%Shape: Circle [id:dp040741673296880054] 
\draw  [color={rgb, 255:red, 155; green, 155; blue, 155 }  ,draw opacity=1 ][fill={rgb, 255:red, 155; green, 155; blue, 155 }  ,fill opacity=0.1 ] (164.18,170) .. controls (164.18,155.74) and (175.74,144.18) .. (190,144.18) .. controls (204.26,144.18) and (215.82,155.74) .. (215.82,170) .. controls (215.82,184.26) and (204.26,195.82) .. (190,195.82) .. controls (175.74,195.82) and (164.18,184.26) .. (164.18,170) -- cycle ;
%Straight Lines [id:da7460253016388965] 
\draw [color={rgb, 255:red, 155; green, 155; blue, 155 }  ,draw opacity=1 ] [dash pattern={on 4.5pt off 4.5pt}]  (340,130) -- (450,130) ;
%Curve Lines [id:da02899673226096855] 
\draw [color={rgb, 255:red, 155; green, 155; blue, 155 }  ,draw opacity=1 ]   (409.08,111.94) .. controls (410.25,116.81) and (410.25,124.69) .. (410.08,130.06) ;
%Shape: Circle [id:dp8584723637415866] 
\draw  [color={rgb, 255:red, 155; green, 155; blue, 155 }  ,draw opacity=1 ][fill={rgb, 255:red, 155; green, 155; blue, 155 }  ,fill opacity=0.1 ] (314.18,130) .. controls (314.18,115.74) and (325.74,104.18) .. (340,104.18) .. controls (354.26,104.18) and (365.82,115.74) .. (365.82,130) .. controls (365.82,144.26) and (354.26,155.82) .. (340,155.82) .. controls (325.74,155.82) and (314.18,144.26) .. (314.18,130) -- cycle ;
%Shape: Circle [id:dp7932169043926973] 
\draw  [fill={rgb, 255:red, 0; green, 0; blue, 0 }  ,fill opacity=1 ] (332.33,111.03) .. controls (332.33,110.21) and (332.99,109.56) .. (333.81,109.56) .. controls (334.62,109.56) and (335.28,110.21) .. (335.28,111.03) .. controls (335.28,111.84) and (334.62,112.5) .. (333.81,112.5) .. controls (332.99,112.5) and (332.33,111.84) .. (332.33,111.03) -- cycle ;
%Shape: Circle [id:dp5780056784358756] 
\draw  [fill={rgb, 255:red, 0; green, 0; blue, 0 }  ,fill opacity=1 ] (194.81,188.92) .. controls (194.81,188.1) and (195.47,187.44) .. (196.29,187.44) .. controls (197.1,187.44) and (197.76,188.1) .. (197.76,188.92) .. controls (197.76,189.73) and (197.1,190.39) .. (196.29,190.39) .. controls (195.47,190.39) and (194.81,189.73) .. (194.81,188.92) -- cycle ;
%Straight Lines [id:da15112076109262818] 
\draw [color={rgb, 255:red, 155; green, 155; blue, 155 }  ,draw opacity=1 ]   (340,130) -- (333.21,152.04) ;
\draw [shift={(332.32,154.91)}, rotate = 287.13] [fill={rgb, 255:red, 155; green, 155; blue, 155 }  ,fill opacity=1 ][line width=0.08]  [draw opacity=0] (7.14,-3.43) -- (0,0) -- (7.14,3.43) -- (4.74,0) -- cycle    ;
%Straight Lines [id:da22041862716425453] 
\draw [color={rgb, 255:red, 155; green, 155; blue, 155 }  ,draw opacity=1 ]   (190,170) -- (181.52,148.8) ;
\draw [shift={(180.41,146.02)}, rotate = 68.2] [fill={rgb, 255:red, 155; green, 155; blue, 155 }  ,fill opacity=1 ][line width=0.08]  [draw opacity=0] (7.14,-3.43) -- (0,0) -- (7.14,3.43) -- (4.74,0) -- cycle    ;
%Shape: Circle [id:dp19406040271056868] 
\draw  [color={rgb, 255:red, 155; green, 155; blue, 155 }  ,draw opacity=1 ][fill={rgb, 255:red, 155; green, 155; blue, 155 }  ,fill opacity=0.1 ] (463.96,90.14) .. controls (463.96,75.88) and (475.52,64.32) .. (489.79,64.32) .. controls (504.05,64.32) and (515.61,75.88) .. (515.61,90.14) .. controls (515.61,104.4) and (504.05,115.97) .. (489.79,115.97) .. controls (475.52,115.97) and (463.96,104.4) .. (463.96,90.14) -- cycle ;
%Shape: Circle [id:dp9213316785680807] 
\draw  [fill={rgb, 255:red, 0; green, 0; blue, 0 }  ,fill opacity=1 ] (494.66,110.67) .. controls (494.66,109.85) and (495.32,109.19) .. (496.13,109.19) .. controls (496.95,109.19) and (497.6,109.85) .. (497.6,110.67) .. controls (497.6,111.48) and (496.95,112.14) .. (496.13,112.14) .. controls (495.32,112.14) and (494.66,111.48) .. (494.66,110.67) -- cycle ;
%Straight Lines [id:da5063892027512448] 
\draw [color={rgb, 255:red, 155; green, 155; blue, 155 }  ,draw opacity=1 ]   (489.79,90.14) -- (478.02,71) ;
\draw [shift={(476.45,68.45)}, rotate = 58.42] [fill={rgb, 255:red, 155; green, 155; blue, 155 }  ,fill opacity=1 ][line width=0.08]  [draw opacity=0] (7.14,-3.43) -- (0,0) -- (7.14,3.43) -- (4.74,0) -- cycle    ;
%Straight Lines [id:da1459227250526588] 
\draw  [dash pattern={on 0.84pt off 2.51pt}]  (190,170) -- (196.29,188.92) ;
%Shape: Right Angle [id:dp9865636403140488] 
\draw   (194.5,168.53) -- (196.04,173.26) -- (191.55,174.73) ;
%Straight Lines [id:da5937193573050675] 
\draw  [dash pattern={on 0.84pt off 2.51pt}]  (489.84,91.75) -- (496.13,110.67) ;
%Shape: Right Angle [id:dp6829011142227331] 
\draw   (491.35,94.73) -- (486.64,96.34) -- (485.07,91.75) ;
%Shape: Circle [id:dp06446406977862307] 
\draw  [color={rgb, 255:red, 65; green, 117; blue, 5 }  ,draw opacity=1 ][fill={rgb, 255:red, 65; green, 117; blue, 5 }  ,fill opacity=1 ] (188.53,170) .. controls (188.53,169.19) and (189.19,168.53) .. (190,168.53) .. controls (190.81,168.53) and (191.47,169.19) .. (191.47,170) .. controls (191.47,170.81) and (190.81,171.47) .. (190,171.47) .. controls (189.19,171.47) and (188.53,170.81) .. (188.53,170) -- cycle ;
%Shape: Circle [id:dp21408306976357316] 
\draw  [color={rgb, 255:red, 65; green, 117; blue, 5 }  ,draw opacity=1 ][fill={rgb, 255:red, 65; green, 117; blue, 5 }  ,fill opacity=1 ] (338.74,129.79) .. controls (338.74,128.97) and (339.4,128.31) .. (340.22,128.31) .. controls (341.03,128.31) and (341.69,128.97) .. (341.69,129.79) .. controls (341.69,130.6) and (341.03,131.26) .. (340.22,131.26) .. controls (339.4,131.26) and (338.74,130.6) .. (338.74,129.79) -- cycle ;
%Straight Lines [id:da47869691646686907] 
\draw  [dash pattern={on 0.84pt off 2.51pt}]  (333.81,111.03) -- (340.09,129.94) ;
%Shape: Right Angle [id:dp0359003547153004] 
\draw   (335.78,131.43) -- (334.05,126.76) -- (338.48,125.12) ;
%Shape: Circle [id:dp6064937373016173] 
\draw  [color={rgb, 255:red, 65; green, 117; blue, 5 }  ,draw opacity=1 ][fill={rgb, 255:red, 65; green, 117; blue, 5 }  ,fill opacity=1 ] (488.31,90.14) .. controls (488.31,89.33) and (488.97,88.67) .. (489.79,88.67) .. controls (490.6,88.67) and (491.26,89.33) .. (491.26,90.14) .. controls (491.26,90.96) and (490.6,91.62) .. (489.79,91.62) .. controls (488.97,91.62) and (488.31,90.96) .. (488.31,90.14) -- cycle ;
%Straight Lines [id:da9951946409297429] 
\draw [color={rgb, 255:red, 208; green, 2; blue, 27 }  ,draw opacity=1 ]   (190,170) -- (490,90) ;

% Text Node
\draw (412.59,119.12) node [anchor=west] [inner sep=0.75pt]  [color={rgb, 255:red, 128; green, 128; blue, 128 }  ,opacity=1 ]  {$\theta _{n_{1}}{}$};
% Text Node
\draw (332.32,112.38) node [anchor=north east] [inner sep=0.75pt]    {$x$};
% Text Node
\draw (198.46,188.15) node [anchor=north west][inner sep=0.75pt]    {$x_{1}^{-}$};
% Text Node
\draw (335.2,133.82) node [anchor=north east] [inner sep=0.75pt]  [color={rgb, 255:red, 128; green, 128; blue, 128 }  ,opacity=1 ]  {$C$};
% Text Node
\draw (188.02,154.25) node [anchor=west] [inner sep=0.75pt]  [color={rgb, 255:red, 128; green, 128; blue, 128 }  ,opacity=1 ]  {$C$};
% Text Node
\draw (497.13,113.32) node [anchor=north west][inner sep=0.75pt]    {$x_{1}^{+}$};
% Text Node
\draw (484.98,74.16) node [anchor=west] [inner sep=0.75pt]  [color={rgb, 255:red, 128; green, 128; blue, 128 }  ,opacity=1 ]  {$C$};
% Text Node
\draw (188.52,171.26) node [anchor=north east] [inner sep=0.75pt]  [color={rgb, 255:red, 65; green, 117; blue, 5 }  ,opacity=1 ]  {$P$};
% Text Node
\draw (341.74,131.85) node [anchor=north west][inner sep=0.75pt]  [color={rgb, 255:red, 65; green, 117; blue, 5 }  ,opacity=1 ]  {$Q$};
% Text Node
\draw (289.66,140.34) node [anchor=south east] [inner sep=0.75pt]  [color={rgb, 255:red, 208; green, 2; blue, 27 }  ,opacity=1 ]  {$L_{n_{1}}$};
% Text Node
\draw (492,90) node [anchor=west] [inner sep=0.75pt]  [color={rgb, 255:red, 65; green, 117; blue, 5 }  ,opacity=1 ]  {$R$};

\end{tikzpicture}

\caption{End of proof of Proposition~\ref{LastProp}: estimation of $p_2(x_1^--x)$. The length $d(P,R)$ of the red segment is bigger than $d_{n_1}-2C$.}\label{Figxx1-}
\end{center}
\end{figure}
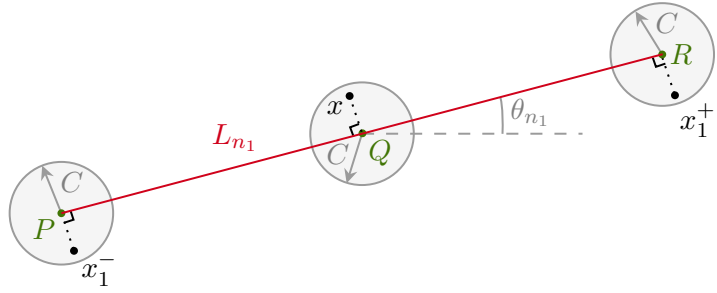

\begin{align*}
b_1^- & = p_2\big( A(x_1^--x)\big)\\
& \le -\frac{d_{n_1}-2C}{2}\sin\theta_{n_1}+2C + |\tan \theta'_{n_2}\,p_1(x_1^--x)|\\
& \le -\frac{d_{n_1}}{2}\sin\theta_{n_1}+3C + |\tan \theta'_{n_2}||a_1^-|\\
& \le -\frac{d_{n_1}}{2}\sin\theta_{n_1}+3C + 2|\tan \theta_{n_2}|d_{n_1},
\end{align*}
where the last inequality is a consequence of \eqref{EqA1-}. Because we have supposed $|\tan \theta_{n_2}|d_{n_1}\le C$, we get
\[b_1^- \le -\frac{d_{n_1}}{2}\sin\theta_{n_1}+5C.\]
Moreover, 
\[h_{n_1}\le p_2(R-P)+2C = \sin\theta_{n_1} d(R,P)+2C \le \sin\theta_{n_1}\big(d_{n_1}+2C\big)+2C \le \sin\theta_{n_1}d_{n_1}+4C,\]
so
\[b_1^- \le -\frac{h_{n_1}-4C}{2}+5C \le -\frac{h_{n_1}}{2}+7C .\]
Hence, because $h_{n_1}\ge 20m$, $6C\le m$ and $n\le 2m$,
\begin{equation}\label{EqB1-}
b_1^-+n  \le -10m+8C +2m= 8(C-m)\le -C.
\end{equation}

Using the fact that $n\ge m\ge 5C$ and that (because $\theta_{n_1}\ge 0$) $p_2(x_1^+)\ge -2C$ (see Figure~\ref{Figxx1-}), we get
\begin{equation}\label{EqB1+}
b_1^+ + n \ge C.
\end{equation}

The estimates \eqref{EqA1-2}, \eqref{EqB1-} and \eqref{EqB1+} allow to apply the following lemma:

\begin{lemma}
Let $M,C\in\R_+$, and $\gamma_2$ be a path of $\R^2$ linking the points $(-M,0)$ and $(M,0)$ of $\R^2$, that is included in $[-M,M]\times [-C,C]$. Let $\gamma_1$ be a path of $\R^2$ linking the points $(a_1^-,b_1^-)$ and $(a_1^+,b_1^+)$ of $\R^2$, that is included in $(-M,M)\times \R$, with $b_1^-<-C$ and $b_1^+>C$. 

Then the paths $\gamma_1$ and $\gamma_2$ meet.
\end{lemma}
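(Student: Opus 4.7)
The plan is to prove this purely topological intersection lemma by a winding number argument applied to the difference map of the two paths. I would first parametrize $\gamma_1 = (x_1, y_1) : [0,1] \to \R^2$ with $\gamma_1(0) = (a_1^-, b_1^-)$ and $\gamma_1(1) = (a_1^+, b_1^+)$, and $\gamma_2 = (x_2, y_2) : [0,1] \to \R^2$ with $\gamma_2(0) = (-M, 0)$ and $\gamma_2(1) = (M, 0)$, and introduce the continuous map $\Phi : [0,1]^2 \to \R^2$ defined by $\Phi(s,t) = \gamma_1(s) - \gamma_2(t)$. An intersection point of $\gamma_1$ and $\gamma_2$ corresponds exactly to a zero of $\Phi$, so I would reason by contradiction and assume that $\Phi$ never vanishes on $[0,1]^2$.

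The first step would be to check that the hypotheses force a very specific sign pattern for $\Phi$ on each of the four sides of the boundary $\partial[0,1]^2$. On the left side $\{0\}\times[0,1]$, the second coordinate of $\Phi$ equals $b_1^- - y_2(t)$, which is strictly negative since $b_1^- < -C$ and $|y_2(t)| \le C$; symmetrically, on the right side $\{1\}\times[0,1]$ the second coordinate equals $b_1^+ - y_2(t) > 0$. On the bottom side $[0,1]\times\{0\}$, the first coordinate equals $x_1(s) + M > 0$ because $x_1(s) > -M$, and on the top side $[0,1]\times\{1\}$ it equals $x_1(s) - M < 0$. Hence $\Phi$ already avoids $0$ on $\partial[0,1]^2$, and travelling once counterclockwise around this boundary the image of $\Phi$ visits successively the open right half-plane, upper half-plane, left half-plane and lower half-plane, with the four corners $(0,0)$, $(1,0)$, $(1,1)$ and $(0,1)$ landing respectively in the open quadrants IV, I, II and III.

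The second step is the winding-number computation. Since on each side the image of $\Phi$ lies in a convex open half-plane not containing $0$, a side-by-side straight-line homotopy (after a preliminary rescaling sending the four corner images onto the unit circle) deforms $\Phi|_{\partial[0,1]^2}$ inside $\R^2\setminus\{0\}$ to the standard counterclockwise unit loop around the origin, so that its winding number around $0$ equals $+1$. On the other hand, under the working assumption $\Phi\neq 0$ on all of $[0,1]^2$, the restriction $\Phi|_{\partial[0,1]^2}$ extends continuously to a map $[0,1]^2 \to \R^2\setminus\{0\}$; since $[0,1]^2$ is contractible, this forces $\Phi|_{\partial[0,1]^2}$ to be nullhomotopic in $\R^2\setminus\{0\}$, and thus to have winding number $0$ around the origin. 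This is the desired contradiction, so $\gamma_1$ and $\gamma_2$ must meet. I expect no real obstacle in this argument; the only point requiring a little care is the winding-number computation on the boundary, but it is directly forced by the four half-plane sign conditions established in the first step.
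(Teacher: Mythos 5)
Your proof is correct, and it takes a genuinely different (though closely related) route from the paper's. Your sign analysis on the four sides of $\partial[0,1]^2$ is exactly right: strict negativity/positivity of the second coordinate of $\Phi$ on the left/right sides uses $b_1^-<-C$ and $b_1^+>C$ together with $|y_2|\le C$, and strict positivity/negativity of the first coordinate on the bottom/top sides uses that $\gamma_1$ lies in the \emph{open} strip $(-M,M)\times\R$; the four corners then sit in the four open quadrants in cyclic order, the half-plane-by-half-plane homotopy gives winding number $+1$ for $\Phi|_{\partial[0,1]^2}$ around the origin, and contractibility of the square yields the contradiction. The paper instead completes $\gamma_2$ to a Jordan loop $\alpha_2$ of the Alexandroff compactification $\Sp^2$ by adjoining the horizontal rays $(-\infty,-M)\times\{0\}$ and $(M,+\infty)\times\{0\}$, notes that $\alpha_2$ is isotopic to the great circle $(\R\times\{0\})\cup\{\infty\}$ by an isotopy supported in $[-M,M]\times[-C,C]$, deduces that the two endpoints of $\gamma_1$ (one below, one above that box) lie in different components of $\Sp^2\setminus\alpha_2$, hence that $\gamma_1$ meets $\alpha_2$, and finally that the meeting point must lie on $\gamma_2$ itself since $\gamma_1\subset(-M,M)\times\R$ avoids the two added rays. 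The two arguments are essentially dual (separation by a curve versus degree of the difference map); yours has the small advantage of not invoking the Jordan curve theorem and of working verbatim for non-simple paths, whereas the paper's ``Jordan loop'' phrasing implicitly requires first replacing $\gamma_2$ by an injective subpath with the same endpoints.
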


\begin{proof}
It suffices to define the path $\alpha_2$ by concatenating $(-\infty,-M)\times\{0\}$, $\gamma_2$ and $(M,+\infty)\times\{0\}$. This is a Jordan loop of the Alexandroff compactification $\Sp^2$ of $\R^2$, which is isotopic --- with an isotopy with support included in $[-M,M]\times [-C,C]$ --- to the path $\R\times\{0\}$. It is then easy to see that the points $(a_1^-,b_1^-)$ and $(a_1^+,b_1^+)$ lie in different connected components of $\Sp^2\setminus \alpha_2$, and hence that $\alpha_2$ and $\gamma_2$ intersect. But it is also easy to see that any intersection point cannot belong to $(-\infty,-M)\times\{0\}$ or $(M,+\infty)\times\{0\}$; this implies that $\gamma_1$ and $\gamma_2$ intersect.
\end{proof}

From this we deduce that for any $n\in[m,2m]$, the paths $\gamma_1+w_m$ and $\gamma_2$ intersect. Hence, for any $n\in [m,2m]\cap \N$, we have
\[\tilde f^{n_1}\big(D_0+(0,n)\big) \cap \tilde f^{n_2}(D)\neq\emptyset,\]
equivalently
\[\big(D_0+(0,n)\big) \cap \tilde f^{n_2-n_1}(D)\neq\emptyset.\]
This implies that there exists two points $z_1,z_2\in f^{n_2-n_1}(D)$ such that $|p_1(z_1)-p_1(z_2)|\le 1$ and $|p_2(z_1)-p_2(z_2)|\ge 3C$. This contradicts the fact that $n_2-n_1$ satisfies the two first points of Claim~\ref{LastClaim}: if $\theta_{n_2-n_1}$ is small enough such a property is incompatible with $\tilde f^{n_2-n_1}(D)-\tilde f^{n_2-n_1}(x_0)\subset B(L_{n_2-n_1}, C)$.
\end{proof}

\small

\bibliographystyle{amsalpha}
\bibliography{Biblio}

\end{document}